\documentclass[11pt,reqno]{amsart}%
\usepackage{graphicx}
\usepackage{amsmath}
\usepackage{amsfonts}
\usepackage{amssymb}
\usepackage{amsthm}
\usepackage{enumerate}
\usepackage{color}
\usepackage{cite}
\usepackage{url}
\usepackage[margin=1in]{geometry}%
\setcounter{MaxMatrixCols}{30}
\providecommand{\U}[1]{\protect\rule{.1in}{.1in}}
\newtheorem{theorem}{Theorem}[section]
\newtheorem{proposition}[theorem]{Proposition}
\newtheorem{lemma}[theorem]{Lemma}

\theoremstyle{definition}
\newtheorem{definition}[theorem]{Definition}
\newtheorem{example}[theorem]{Example}
\newtheorem{conjecture}[theorem]{Conjecture}

\begin{document}
\title{Multiplicities of eigenvalues of tensors}
\author{Shenglong Hu}
\address{Department of  Mathematics, School of Science, Tianjin University, Tianjin 300072, China.}
\email{timhu@tju.edu.cn}
\address{Department of  Mathematics, National University of Singapore, 10 Lower Kent Ridge Road, 119076, Singapore.}
\email{mathush@nus.edu.sg}

\author{Ke Ye}
\address{Department of Mathematics, University of Chicago, Chicago, IL 60637-1514, USA.}
\email{kye@math.uchicago.edu}

\begin{abstract}
We study in this article multiplicities of eigenvalues of tensors. There are two natural multiplicities associated to an eigenvalue $\lambda$ of a tensor: algebraic multiplicity $\operatorname{am}(\lambda)$ and geometric multiplicity $\operatorname{gm}(\lambda)$. The former is the multiplicity of the eigenvalue as a root of the characteristic polynomial, and the latter is the dimension of the eigenvariety (i.e., the set of eigenvectors) corresponding to the eigenvalue.

We show that the algebraic multiplicity could change along the orbit of tensors by the orthogonal linear group action, while the geometric multiplicity of the zero eigenvalue is invariant under this action, which is the main difficulty to study their relationships. However, we show that for a generic tensor, every eigenvalue has a unique (up to scaling) eigenvector, and both the algebraic multiplicity and geometric multiplicity are one.
In general, we suggest for an $m$-th order $n$-dimensional tensor the relationship
\[
\operatorname{am}(\lambda)\geq  \operatorname{gm}(\lambda)(m-1)^{\operatorname{gm}(\lambda)-1}.
\]
We show that it is true for serveral cases, especially when the eigenvariety contains a linear subspace of dimension $\operatorname{gm}(\lambda)$ in coordinate form. As both multiplicities are invariants under the orthogonal linear group action in the matrix counterpart, this generalizes the classical result for a matrix: the algebraic mutliplicity is not smaller than the geometric multiplicity.
\end{abstract}
\keywords{Tensor, eigenvalue, eigenvector, algebraic multiplicity, geometric multiplicity}
\subjclass[2010]{15A18; 15A42; 15A69}
\maketitle

\section{Introduction}\label{sec:intro}
Tensors are ubiquitous and inevitable generalizations of matrices.  Eigenvalue of a tensor, as a natural generalized notion of the eigenvalue of a square matrix, is proposed in recent years to study tensors independently by Lim \cite{l05} and Qi \cite{q05}. Among others, the number of eigenvalues \cite{q05,cs13,fo14,nqww07,lqz13}, Perron-Frobenius theorem for nonnegative tensors \cite{cpz08}, and applications to spectral hypergraph theory \cite{cd12,hq12,hq13b,hq14,l07,q14} are the well-studied topics. 

The eigenvalues of  a tensor are the roots of the characteristic polynomial, which is a monic polynomial with degree being determined by the order and the dimension of the tensor \cite{q05,hhlq13}. Therefore, for an eigenvalue, we can define its algebraic multiplicity as the multiplicity of the eigenvalue as a root of the characteristic polynomial, and the geometric multiplicity as the dimension of the set of eigenvectors corresponding to this eigenvalue. However, the set of eigenvectors of an eigenvalue is not a linear subspace in general, which is due to the nonlinearity of the eigenvalue equations of tensors. The two multiplicities are generalizations of those for eigenvalues of matrices. 
In the matrix counterpart, the classical linear algebra says that the algebraic multiplicity always greater than the geometric multiplicity. While, from the literature of eigenvalues of tensors, it is not clear what is the relationship between the algebraic multiplicity and the geometric multiplicity of an eigenvalue. It can also been seen from applications \cite{fo14,cd12,hq14,oo12} that the understanding of the multiplicities of an eigenvalue is a necessity. 

This article is devoted to the study on this topic, the rest of which is organized as follows. Basic notions, such as determinants, eigenvalues and their multiplicities of tensors, are presented in the next section. 
We discuss the multiplicities of the zero eigenvalue of a tensor in Sections~\ref{sec:zero} and \ref{sec:lower}. More precisely, we show that the algebraic multiplicity is not an invariant of a tensor under the orthogonal linear group action, whereas the geometric multiplicity is in Section~\ref{sec:zero}. Therefore, the general relation between the two multiplicities would be a subtle problem to uncover and we make a reasonable conjecture on it in this section. 

\subsection*{Conjecture}
Suppose that an $m$-th order $n$-dimensional tensor $\mathcal A$ has an eigenvalue $\lambda$ with the set of eigenvectors (i.e., eigenvariety) $V(\lambda)$ possessing $\kappa$ irreducible components $V_1,\dots, V_\kappa$. Then, the conjecture is that 
\[
\operatorname{am}(\lambda)\geq \sum_{i=1}^\kappa\operatorname{dim}(V_i)(m-1)^{\operatorname{dim}(V_i)-1},
\]
where $\operatorname{am}(\lambda)$ is the algebraic multiplicity of $\lambda$ and $\operatorname{dim}(V_i)$ is the dimension of $V_i$ as a subvariety in the affine space. Since $\operatorname{dim}(V_i)=\operatorname{gm}(\lambda)$, the geometric multiplicity, for some $i$, this general conjecture implies the specific conjecture between the two multiplicities: 
\begin{equation}\label{intro:eq}
\operatorname{am}(\lambda)\geq \operatorname{gm}(\lambda)(m-1)^{\operatorname{gm}(\lambda)-1}.
\end{equation} 

The relation \eqref{intro:eq} is a generalization of that in the matrix counterpart (i.e., $m=2$): the algebraic multiplicity is greater than the geometric multiplicity, which can be proved by the basic case when the eigenspace is a linear subspace in coordinate form and the fact that in this situation both multiplicities are invariants under the orthogonal linear group action (cf.\ Section~\ref{sec:matrix}).  

The conjecture \eqref{intro:eq} is shown to be true for several interesting cases in the sequel.
Section~\ref{sec:lower} is for lower rank symmetric tensors which include orthogonal decomposable symmetric tensors, and Section~\ref{sec:linear} is for tensors whose eigenvarieties contain linear subspaces of dimension $\operatorname{gm}(\lambda)$ in coordinate form, which generalizes the relation in the matrix counterpart.
In Section~\ref{sec:generic}, we discuss the generic case. We show that for a generic tensor, both the algebraic multiplicity and geometric multiplicity are one. Likewise, in this case, we have the relation \eqref{intro:eq}. Moreover, we use the Shape Lemma to show that in this case, each eigenvalue has a unique (up to scaling) eigenvector. In Section~\ref{sec:duality}, we discuss symmetric tensors, for which the determinantal hypersurface is the dual of the Veronese variety. We study the eigenvectors from the perspective of variety duality. Some final remarks are given in the last section. 

\section{Preliminaries}\label{sec:pre}

\begin{definition}[Classical Resultant]\label{def:resultant}
For fixed positive degree $d$, let $f_i:=\sum_{|\alpha|=d}c_{i,\alpha}\mathbf{x}^\alpha$ be a homogenous polynomial of degree $d$ in $\mathbb{C}[\mathbf{x}]$ for $i\in\{1,\ldots,n\}$. Then the unique polynomial $\mbox{RES}\in\mathbb{Z}[\{u_{i,\alpha}\}]$, which has the following properties, is called the {\rm resultant} of degrees $(d,\ldots, d)$.
\begin{itemize}
\item [(i)] The system of polynomial equations $f_1=\cdots=f_n=0$ has a nontrivial solution in $\mathbb{C}^n$ if and only if $\mbox{Res}(f_1,\ldots,f_n)=0$.
\item [(ii)] $\mbox{Res}(x_1^{d_1},\ldots,x_n^{d_n})=1$.
\item [(iii)] $\mbox{RES}$ is an irreducible polynomial in $\mathbb{C}[\{u_{i,\alpha}\}]$.
\end{itemize}
\end{definition}

The differences between the capital notation $\mbox{RES}$ and the notation $\mbox{Res}(f_1,\ldots,f_n)$ for a specific system $(f_1,\ldots,f_n)$ are: the former is understood as a polynomial in the variables $\{u_{i,\alpha}\;|\;|\alpha|=d,\;i\in\{1,\ldots,n\}\}$ and the latter is understood as the evaluation of $\mbox{RES}$ at the point $\{u_{i,\alpha}=c_{i,\alpha}\}$ with $\{c_{i,\alpha}\}$ being given by $f_i$. Therefore, $\mbox{Res}(f_1,\ldots,f_n)$ is a number in $\mathbb{C}$.

Let $\mathcal T=(t_{i_1\ldots
i_m})$ be an $m$-th order $n$-dimensional tensor,
$\mathbf{x}=(x_i)\in\mathbb{C}^n$ (the $n$-dimensional complex
space) and $\mathcal T\mathbf{x}^{m-1}$ be an $n$-dimensional vector
with its $i$-th element being
\[
\sum_{i_2=1}^n\cdots\sum_{i_m=1}^nt_{ii_2\ldots i_m}x_{i_2}\cdots
x_{i_m}.
\]
It is actually a tensor contraction. The set of all $m$-th order $n$-dimensional tensors is denoted as $\mathbb T(\mathbb C^n,m)$, which is $\otimes^m\mathbb C^n=\mathbb C^n\otimes\dots\otimes\mathbb C^n$ ($m$ times). 

\begin{definition}[Tensor Determinant]\label{def:determinant}
Let $\mbox{RES}$ be the resultant of degrees $(m-1,\ldots, m-1)$ which is a polynomial in variables $\{u_{i,\alpha}\;|\;|\alpha|=m-1,\;i\in\{1,\ldots,n\}\}$. 
The determinant $\mbox{DET}$ of $m$-th order $n$-dimensional tensors is defined as the polynomial with variables $\{v_{ii_2\ldots i_m}\;|\;i,i_2,\ldots,i_m\in\{1,\ldots,n\}\}$ through replacing $u_{i,\alpha}$ in the polynomial $\mbox{RES}$ by $\sum_{(i_2,\ldots,i_m)\in \mathbb{X}(\alpha)}v_{ii_2\ldots i_m}$. Here $\mathbb{X}(\alpha):=\{(i_2,\ldots,i_m)\in\{1,\ldots,n\}^{m-1}\;|\;x_{i_2}\cdots x_{i_m}=\mathbf{x}^{\alpha}\}$. The value of the determinant $\mbox{Det}(\mathcal T)$ of a specific tensor $\mathcal T$ is defined as
the evaluation of $\mbox{DET}$ at the point $\{v_{ii_2\ldots i_m}=t_{ii_2\ldots i_m}\}$.
\end{definition}
For the convenience of the subsequent analysis, we define $\mbox{DET}(\mathcal T)$ as the polynomial with variables $\{t_{ii_2\ldots i_m}\;|\;i,i_2,\ldots,i_m\in\{1,\ldots,n\}\}$ through replacing $v_{ii_2\ldots i_m}$ in $\mbox{DET}$ by $t_{ii_2\ldots i_m}$. There can be some specific relations on the variables $\{t_{ii_2\ldots i_m}\}$, such as some being zero. In this case, $\mathcal T$ is considered as a tensor of indeterminate variables, while it is considered as a tensor of numbers in $\mathbb C$ when we talk about $\mbox{Det}(\mathcal T)$.

\begin{definition}[Eigenvalues and Eigenvectors]\label{def:eigenvalue-eigenvector}
Let tensor $\mathcal T=(t_{ii_2\ldots i_m})\in\mathbb{T}(\mathbb{C}^n,m)$. A number $\lambda\in\mathbb C$ is called an eigenvalue of $\mathcal T$, if there exists a vector $\mathbf x\in\mathbb C^n\setminus\{\mathbf 0\}$ which is called an eigenvector such that
\[
\mathcal T\mathbf x^{m-1}=\lambda\mathbf x^{[m-1]},
\]
where $\mathbf x^{[m-1]}$ is an $n$-dimensional vector with its $i$-th component being $x_i^{m-1}$. 
\end{definition}

The next proposition is a direct consequence of the definition, see also \cite{hhlq13}.
The set of eigenvalues of a given tensor $\mathcal T$ is always finite \cite{hhlq13}, which is denoted as $\sigma(\mathcal T)$, and it is the set of roots of the univariate polynomial 
\[
\chi(\lambda)=\operatorname{Det}(\lambda\mathcal I-\mathcal T)
\]
which is called the \textit{characteristic polynomial} of $\mathcal T$. 

\begin{proposition}\label{prop:irreducible}
Let the tensor space being $\mathbb T(\mathbb C^n,m)$.
The determinant $\operatorname{DET}\in\mathbb C[\{v_{i_1\dots i_m}\}]$ is irreducible, and
for any $\mathcal T\in\mathbb T(\mathbb C^n,m)$
\[
\operatorname{Det}(\mathcal T)=0\ \iff\ 0\in\sigma(\mathcal T).
\]
\end{proposition}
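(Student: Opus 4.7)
The plan is to prove the two assertions separately: the equivalence $\operatorname{Det}(\mathcal T)=0 \iff 0\in\sigma(\mathcal T)$ will be a direct unwinding of definitions, while the irreducibility of $\operatorname{DET}$ will be obtained by reducing it to the irreducibility of $\operatorname{RES}$ via a linear change of variables.

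For the equivalence, I would observe that $0\in\sigma(\mathcal T)$ means, by Definition~\ref{def:eigenvalue-eigenvector}, that there is a nonzero $\mathbf x\in\mathbb C^n$ with $\mathcal T\mathbf x^{m-1}=\mathbf 0$. Setting $f_i(\mathbf x):=(\mathcal T\mathbf x^{m-1})_i=\sum_{i_2,\ldots,i_m} t_{ii_2\ldots i_m}x_{i_2}\cdots x_{i_m}$, this is exactly the statement that the homogeneous system $f_1=\cdots=f_n=0$ of degree $m-1$ has a nontrivial solution. By Definition~\ref{def:resultant}(i) this is equivalent to $\operatorname{Res}(f_1,\ldots,f_n)=0$, and by Definition~\ref{def:determinant} this value agrees with $\operatorname{Det}(\mathcal T)$ because regrouping the monomials $x_{i_2}\cdots x_{i_m}$ of $f_i$ according to the multi-index $\alpha$ gives coefficients $u_{i,\alpha}=\sum_{(i_2,\ldots,i_m)\in\mathbb X(\alpha)}t_{ii_2\ldots i_m}$, which is precisely the substitution used to pass from $\operatorname{RES}$ to $\operatorname{DET}$.

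For the irreducibility, the idea is to introduce new variables $\{w_{ii_2\ldots i_m}\}$ via an invertible linear change of coordinates on $\mathbb C[\{v_{ii_2\ldots i_m}\}]$. For each pair $(i,\alpha)$ with $|\alpha|=m-1$, choose one representative tuple $(i_2^*,\ldots,i_m^*)\in\mathbb X(\alpha)$, and set
\[
w_{ii_2^*\ldots i_m^*}:=\sum_{(i_2,\ldots,i_m)\in\mathbb X(\alpha)} v_{ii_2\ldots i_m},\qquad w_{ii_2\ldots i_m}:=v_{ii_2\ldots i_m}\ \text{otherwise.}
\]
This is triangular with unit diagonal, hence invertible, so $\mathbb C[\{v\}]=\mathbb C[\{w\}]$. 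In the $w$-coordinates, $\operatorname{DET}$ is obtained from $\operatorname{RES}(\{u_{i,\alpha}\})$ by identifying $u_{i,\alpha}$ with $w_{ii_2^*\ldots i_m^*}$, so it is simply $\operatorname{RES}$ viewed as a polynomial in the subset $\{w_{ii_2^*\ldots i_m^*}\}$ of variables and sitting inside the larger ring $\mathbb C[\{w\}]$. Since $\operatorname{RES}$ is irreducible in $\mathbb C[\{u_{i,\alpha}\}]$ by Definition~\ref{def:resultant}(iii), and since an irreducible polynomial remains irreducible after freely adjoining more indeterminates (the extension $\mathbb C[\{w_{ii_2^*\ldots i_m^*}\}]\hookrightarrow\mathbb C[\{w\}]$ is a polynomial extension in the remaining $w$-variables, which are transcendental), $\operatorname{DET}$ is irreducible in $\mathbb C[\{v_{ii_2\ldots i_m}\}]$.

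The only place that requires a bit of care is making sure that the chosen representatives $(i_2^*,\ldots,i_m^*)$ are genuine distinct tuples indexing distinct tensor entries, so the change of variables is well-defined and triangular; this is clear since the sets $\mathbb X(\alpha)$ partition $\{1,\ldots,n\}^{m-1}$ as $\alpha$ ranges over multi-indices of weight $m-1$. Everything else is a direct invocation of Definitions~\ref{def:resultant} and \ref{def:determinant} together with the elementary fact about extending scalars in a polynomial ring.
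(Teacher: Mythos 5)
Your proof is correct. The paper itself offers no argument for this proposition---it is stated as ``a direct consequence of the definition'' with a pointer to \cite{hhlq13}---so you are supplying details the authors left to a citation. Your first half is exactly the intended unwinding of Definitions~\ref{def:resultant}(i) and \ref{def:determinant}: regrouping the monomials of $(\mathcal T\mathbf x^{m-1})_i$ by multi-index shows $\operatorname{Det}(\mathcal T)=\operatorname{Res}(f_1,\dots,f_n)$, and property (i) of the resultant gives the equivalence with the existence of a nontrivial zero, i.e., with $0\in\sigma(\mathcal T)$. Your irreducibility argument is also sound: the substitution $u_{i,\alpha}\mapsto\sum_{(i_2,\dots,i_m)\in\mathbb X(\alpha)}v_{ii_2\dots i_m}$ is, after your unipotent (hence invertible) change of coordinates, just the inclusion of $\operatorname{RES}$ into a larger polynomial ring obtained by adjoining the non-representative variables, and a non-constant irreducible polynomial stays irreducible under such an extension. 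The one point you flag---that the representatives $(i,i_2^*,\dots,i_m^*)$ are pairwise distinct so the change of variables is well defined---is indeed the only thing to check, and it holds because the sets $\mathbb X(\alpha)$ partition $\{1,\dots,n\}^{m-1}$ and the slice index $i$ is carried along unchanged. This is essentially the standard argument (as in \cite{hhlq13}), written out in full.
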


For a given eigenvalue $\lambda\in\sigma(\mathcal T)$, the set of eigenvectors corresponding to the eigenvalue $\lambda$ is denoted as $V_{\mathcal T}(\lambda)$ (simplified as $V(\lambda)$ whenever it is clear from the content) \footnote{Here we inlude the vector $\mathbf 0$ for convenience.}:
\[
V_{\mathcal T}(\lambda):=\big\{\mathbf x\in\mathbb C^n: \mathcal T\mathbf x^{m-1}=\lambda\mathbf x^{[m-1]}\big\}.
\]
One can either treat $V(\lambda)$ as a variety in $\mathbb {PC}^{n-1}$ (the projective space of $\mathbb C^n$) or as a variety in $\mathbb C^n$. In this article, we will take $V(\lambda)$ as an affine variety. Instead of eigenspace as in linear algebra, we use the nomenclature \textit{eigenvariety} for $V(\lambda)$, which can be sensed from the next example. 
\begin{example}
Take the tensor $\mathcal T\in\mathbb T(\mathbb C^2,3)$ as
\[
t_{111}=2, \ t_{122}=1,\ t_{222}=1,\ \text{and }t_{ijk}=0\ \text{for the others}. 
\]
The eigenvalue equations are
\[
2x_1^2+x_2^2=\lambda x_1^2,\ x_2^2=\lambda x_2^2. 
\]
Obviously, $1\in\sigma(\mathcal T)$ is an eigenvalue of $\mathcal T$, and
\[
V(1)=\big\{\alpha(\sqrt{-1},1),\beta(-\sqrt{-1},1), \ \alpha, \beta\in \mathbb C\big\}=\mathbb C (\sqrt{-1},1)\cup \mathbb C (-\sqrt{-1},1).
\]
It is easy to see that $V(1)$ is reducible and hence not a linear subspace, and the dimension is $1$. 
\end{example}

\begin{definition}[Multiplicity]\label{def:mutliplicity}
Let tensor $\mathcal T=(t_{ii_2\ldots i_m})\in\mathbb{T}(\mathbb{C}^n,m)$. The \textit{algebraic multiplicity} $\operatorname{am}(\lambda)$ of an eigenvalue $\lambda\in\sigma(\mathcal T)$ is the multiplicity of $\lambda$ as a root of the characteristic polynomial $\chi(\lambda)$. The \textit{geometric multiplicity} $\operatorname{gm}(\lambda)$ of an eigenvalue $\lambda\in\sigma(\mathcal T)$ is  the dimension of the variety $V(\lambda)\subseteq\mathbb{C}^{n}$.
\end{definition}

We refer to \cite{clo98,gkz94,h77} for the basic algebraic geometry terminologies. The definitions of multiplicities boils down to the classical ones for matrices \cite{hj85}. 

\begin{proposition}[Disjoint]\label{prop:disoint}
Let tensor $\mathcal T=(t_{ii_2\ldots i_m})\in\mathbb{T}(\mathbb{C}^n,m)$. For any $\lambda,\mu\in\sigma(\mathcal T)$, we have
\[
V(\lambda)\cap V(\mu)=\{\mathbf 0\} \ \iff\ \lambda\neq \mu,
\]
and
\[
V(\lambda)=V(\mu) \ \iff\ \lambda=\mu.
\]
\end{proposition}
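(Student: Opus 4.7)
The proof will rest on a single short observation: if $\mathbf{x}$ is a common nonzero vector in $V(\lambda)$ and $V(\mu)$, then comparing the two eigenvalue identities
\[
\mathcal T\mathbf x^{m-1}=\lambda\mathbf x^{[m-1]} \quad\text{and}\quad \mathcal T\mathbf x^{m-1}=\mu\mathbf x^{[m-1]}
\]
and subtracting yields $(\lambda-\mu)\mathbf x^{[m-1]}=\mathbf 0$. The key point is that the coordinate-wise $(m-1)$-th power map $\mathbf x\mapsto\mathbf x^{[m-1]}$ is injective away from $\mathbf 0$ in the sense that it sends any nonzero vector to a nonzero vector: if $x_i\neq 0$ for some $i$, then $x_i^{m-1}\neq 0$ as well since we are in a field. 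Consequently $\lambda=\mu$.

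With this observation in hand, I would prove the first biconditional as follows. For the $(\Leftarrow)$ direction, suppose $\lambda\neq \mu$ and take $\mathbf x\in V(\lambda)\cap V(\mu)$; the computation above gives $(\lambda-\mu)\mathbf x^{[m-1]}=\mathbf 0$, and since $\lambda-\mu\neq 0$ we conclude $\mathbf x^{[m-1]}=\mathbf 0$, which forces $\mathbf x=\mathbf 0$. For the $(\Rightarrow)$ direction, contrapose: if $\lambda=\mu$, then $V(\lambda)=V(\mu)$ and this set contains at least one nonzero eigenvector (by the hypothesis $\lambda\in\sigma(\mathcal T)$), so $V(\lambda)\cap V(\mu)\supsetneq\{\mathbf 0\}$.

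The second biconditional follows at once. The direction $\lambda=\mu\Rightarrow V(\lambda)=V(\mu)$ is a tautology. For the converse, if $V(\lambda)=V(\mu)$, pick any nonzero $\mathbf x\in V(\lambda)$ (which exists because $\lambda\in\sigma(\mathcal T)$). Then $\mathbf x\in V(\mu)$ as well, and the observation above gives $\lambda=\mu$.

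There is no substantive obstacle here; the only subtlety worth stating explicitly is that the nonlinear map $\mathbf x\mapsto \mathbf x^{[m-1]}$ does not produce any spurious zeros on nonzero inputs, which is what allows the argument to mirror its classical linear-algebra counterpart. The plan is therefore just to record the computation cleanly and invoke the nonvanishing of $\mathbf x^{[m-1]}$ at the right moment.
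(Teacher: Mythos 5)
Your proof is correct and rests on exactly the same computation as the paper's: a common nonzero eigenvector forces $\lambda\mathbf x^{[m-1]}=\mathcal T\mathbf x^{m-1}=\mu\mathbf x^{[m-1]}$, and the nonvanishing of $\mathbf x^{[m-1]}$ for $\mathbf x\neq\mathbf 0$ gives $\lambda=\mu$. You spell out the remaining trivial directions that the paper leaves implicit, but the approach is identical.
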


\begin{proof}
We only need to show that $V(\lambda)\cap V(\mu)=\{\mathbf 0\}$ whenever $\lambda\neq \mu$. 
Suppose on the contrary that $(\lambda, \mathbf x)$ and $(\mu,\mathbf x)$ are two eigenpairs of $\mathcal T$ with $\lambda\neq\mu$ and $\mathbf x\neq\mathbf 0$. Then, we have that 
\[
\lambda\mathbf x^{[m-1]}=\mathcal T\mathbf x^{m-1}=\mu\mathbf x^{[m-1]}
\]
which is a contradiction, since $\mathbf x\neq\mathbf 0$. 
\end{proof}

\section{The Multiplicities of the Zero Eigenvalue}\label{sec:zero}
In this section, we discuss some properties of the multiplicities of the zero eigenvalue of a tensor. The general purpose is to have an intuition on the difficulty and the expected relationships between the two multiplicities. It begins with the matrix counterpart. 
\subsection{The Matrix Case}\label{sec:matrix}
There would be several ways to prove that the algebraic multiplicity of an eigenvalue of a matrix is no smaller than its geometric multiplicity, one standard approach would be the following. Let $A\in\mathbb C^{n\times n}$ and $\lambda_*\in\sigma(A)$ be an eigenvalue of $A$ with the subspace of eigenvectors being
\begin{equation}\label{matrix:coord}
V(\lambda_*)=\{\mathbf x\in\mathbb C^n : x_{k+1}=\dots=x_n=0\}. 
\end{equation}
So, $\operatorname{gm}(\lambda_*)=k$. We can partition $A$ according to $\{1,\dots,n\}=\{1,\dots,k\}\cup\{k+1,\dots,n\}$ as
\[
A=\begin{bmatrix}A_1&A_2\\A_3&A_4\end{bmatrix}
\]
with $A_1\in\mathbb C^{k\times k}$ and $A_4\in\mathbb C^{(n-k)\times (n-k)}$. Since the eigenspace of $A$ for $\lambda_*$ has the form \eqref{matrix:coord}, we can get that
\[
A_1=\lambda_* I\ \text{and }A_3=\mathbf 0.
\]
Therefore,
\[
\chi(\lambda)=\operatorname{Det}(\lambda I-A)=\operatorname{Det}(\lambda I-A_1)\operatorname{Det}(\lambda I-A_4)=(\lambda-\lambda_*)^k\operatorname{Det}(\lambda I-A_4). 
\]
Henceforth, 
\begin{equation}\label{matrix:rel}
\operatorname{am}(\lambda_*)\geq k=\operatorname{gm}(\lambda_*).
\end{equation}

For the general case when $V(\lambda_*)$ is not in the coordinate form \eqref{matrix:coord}, we can always adopt an orthogonal linear transformation $P\in\mathbb{O}(n,\mathbb C)$ (the orthogonal linear group of order $n$ over the field $\mathbb C$) such that
\[
P V(\lambda_*):=\{P\mathbf x : \mathbf x\in V(\lambda_*)\}=\{\mathbf x\in\mathbb C^n : x_{k+1}=\dots=x_n=0\}. 
\]
It is a matter of fact that the matrix 
\[
B=PAP^\mathsf{T}
\]
has $\lambda_*$ being an eigenvalue with the corresponding eigenspace being $PV(\lambda_*)$ and hence in the coordinate form \eqref{matrix:coord}. So, the proceeding discussion implies that 
\[
\operatorname{Det}(\lambda I-B)=(\lambda-\lambda_*)^kp(\lambda)
\]
for some monic polynomial $p\in\mathbb C[\lambda]$ of degree $n-k$. While,
\[
\operatorname{Det}(\lambda I-B)=\operatorname{Det}(\lambda I-PAP^\mathsf{T})=\operatorname{Det}(P(\lambda I-A)P^\mathsf{T})=\operatorname{Det}(\lambda I-A). 
\]
Henceforth, \eqref{matrix:rel} still holds for the general case. 

In summary, the above technique relies heavily on the fact that the eigenvalues (or equivalently the zero eigenvalue) together with their multiplicities of a matrix are invariants under the orthogonal group action. 
Likewise, we can associate a group action to tensors like that for matrices. While, the eigenvalues of a tensor are not invariants under this action anymore \cite{q05}. However, the zero eigenvalue is an invariant of a tensor under this action \cite{hhlq13}. 

\subsection{The Zero Eigenvalue}\label{sec:tensor-zero}
Given a tensor $\mathcal A\in \mathbb{T}(\mathbb{C}^n,m)$, and matrices $P^{(i)}\in\mathbb C^{r\times n}$ for $i=1,\dots,m$, we can define the matrix-tensor multiplication $(P^{(1)},P^{(2)}\dots,P^{(m)})\cdot\mathcal A\in\mathbb T(\mathbb C^r,m)$ as  (cf.\ \cite{l07,l12})
\begin{multline*}
\big[(P^{(1)},P^{(2)}\dots,P^{(m)})\cdot\mathcal A\big]_{i_1i_2\dots i_m}:=\\
\sum_{j_1,j_2,\dots,j_m=1}^na_{j_1j_2\dots j_m}p^{(1)}_{i_1j_1}p^{(2)}_{i_2j_2}\dots p^{(m)}_{i_mj_m}\ \text{for all }i_1,i_2\dots,i_m=1,\dots,r. 
\end{multline*}
This generalizes the matrix multiplication: it is easy to see that for a matrix $A$, we have $(P,P)\cdot A=PAP^\mathsf{T}$. In general, when
\[
P^{(1)}=\cdots=P^{(m)}=P,
\]
we simplify $(P,P\dots,P)\cdot\mathcal A$ as $P\cdot\mathcal A$. 
It is a direct calculation to see that for all $P\in\mathbb C^{r\times n}$, 
\begin{equation}\label{group-equ}
\big(P\cdot\mathcal A\big)\mathbf x^{m-1}=P\bigg\{\big[(I,P,\dots,P)\cdot\mathcal A\big]\mathbf x^{m-1}\bigg\}=P\big[\mathcal A\big(P^\mathsf{T}\mathbf x)^{m-1}\big)\big]\ \text{for all }\mathbf x\in\mathbb C^r,
\end{equation}
where $I$ is the identity matrix in $\mathbb C^{n\times n}$. 

Particularly, when $r=n$, and $P\in\mathbb{GL}(n,\mathbb C)$, it is easy to see that the system of polynomial equations
\[
\mathcal A\mathbf x^{m-1}=\mathbf 0
\]
becomes
\[
\big[(I,P^\mathsf{-T},\dots,P^\mathsf{-T})\cdot\mathcal A\big]\mathbf y^{m-1}=\mathbf 0
\]
under the coordinate change $\mathbb C^n\to\mathbb C^n$ through $\mathbf x\mapsto \mathbf y=P\mathbf x$. 

When $P\in\mathbb{O}(n,\mathbb C)$, the matrix-tensor multiplication $P\cdot\mathcal A$ becomes a natural orthogonal linear group action on the space $\mathbb{T}(\mathbb{C}^n,m)$. We have also the group action on $\mathbb{GL}(n,\mathbb C)$, the general linear group. It is a matter of fact that the determinantal hypersurface $\mathbb V(\operatorname{DET})$ is invariant under this group action (see \cite{gkz94} or directly from Definition~\ref{def:determinant}). The group action structure of the matrix-tensor multiplication for $\mathbb{O}(n,\mathbb C)$ will only be encountered in this section, whereas in the others solely being an algebraic operation between matrices and tensors is understood. 

The next result shows the subtlety of algebraic multiplicity from the geometric prespective. 
\begin{proposition}[Zero Algebraic Multiplicity]\label{prop:noninvariant}
The algebraic multiplicity of the zero eigenvalue of a tensor is not invariant under the group action by $\mathbb{O}(n,\mathbb C)$ as above.
\end{proposition}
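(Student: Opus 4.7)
The plan is to establish non-invariance by exhibiting an explicit counterexample in the smallest nontrivial setting $\mathbb{T}(\mathbb{C}^2, 3)$. For third-order two-dimensional tensors the characteristic polynomial has degree $n(m-1)^{n-1} = 4$, so my goal is to produce a tensor $\mathcal{A}$ and an orthogonal matrix $P \in \mathbb{O}(2, \mathbb{C})$ for which $\operatorname{am}_{\mathcal{A}}(0)$ and $\operatorname{am}_{P \cdot \mathcal{A}}(0)$ differ as integers in $\{0, 1, 2, 3, 4\}$.

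The strategy is to start with a very sparse tensor whose characteristic polynomial is transparent, then apply an orthogonal transformation that maximally delocalizes the single nonzero entry, and compare the two resulting characteristic polynomials via Sylvester resultants. First I would take $\mathcal{A}$ to be the diagonal tensor with $a_{111} = 1$ and all other entries zero. The system $\mathcal{A}\mathbf{x}^2 = \lambda \mathbf{x}^{[2]}$ decouples into $(\lambda - 1)x_1^2 = 0$ and $\lambda x_2^2 = 0$, and the corresponding $4 \times 4$ Sylvester matrix is already diagonal, giving at once
\[
\chi_{\mathcal{A}}(\lambda) = \lambda^2 (\lambda - 1)^2 \qquad \text{and} \qquad \operatorname{am}_{\mathcal{A}}(0) = 2.
\]

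Next I would apply the Hadamard-type orthogonal matrix $P = \tfrac{1}{\sqrt{2}} \begin{pmatrix} 1 & 1 \\ 1 & -1 \end{pmatrix}$. Since $p_{i1} = 1/\sqrt{2}$ for each $i \in \{1, 2\}$, the contraction $[P \cdot \mathcal{A}]_{i_1 i_2 i_3} = \sum_{j_1, j_2, j_3} a_{j_1 j_2 j_3}\, p_{i_1 j_1} p_{i_2 j_2} p_{i_3 j_3}$ survives only through the single $a_{111}$ term and produces the constant tensor with every entry equal to $c := 1/(2\sqrt{2})$. The transformed eigenvalue system reads $\lambda x_i^2 = c(x_1 + x_2)^2$ for $i = 1, 2$, and I expect the corresponding $4 \times 4$ Sylvester determinant to collapse, after subtracting row $3$ from row $1$ and row $4$ from row $2$, to the clean factored form
\[
\chi_{P \cdot \mathcal{A}}(\lambda) = \lambda^3 (\lambda - 4c).
\]
Hence $\operatorname{am}_{P \cdot \mathcal{A}}(0) = 3$, and the strict inequality $3 > 2$ proves the proposition.

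The only computationally delicate step is the second Sylvester evaluation, but the row repetition produced by the constant tensor makes the determinant factor cleanly via elementary row operations. The conceptual reason that such an increase can occur at all---and the reason the matrix argument of Section~\ref{sec:matrix} breaks down---is that for $m \geq 3$ the identity tensor $\mathcal{I}$ is \emph{not} preserved by the $\mathbb{O}(n, \mathbb{C})$ action, so in general $\lambda \mathcal{I} - P \cdot \mathcal{A} \neq P \cdot (\lambda \mathcal{I} - \mathcal{A})$, in sharp contrast to the matrix identity $P I P^{\mathsf{T}} = I$ for orthogonal $P$.
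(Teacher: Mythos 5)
Your proof is correct and follows essentially the same strategy as the paper: an explicit counterexample in $\mathbb{T}(\mathbb{C}^2,3)$ obtained by hitting a sparse tensor with a Hadamard-type matrix in $\mathbb{O}(2,\mathbb{C})$. The details differ in two minor but pleasant ways. First, the paper starts from $a_{112}=1$ (an upper-triangular, nilpotent-like tensor with $\chi(\lambda)=\lambda^4$, so $\operatorname{am}(0)=4$) and only needs to observe that the transformed tensor acquires a nonzero trace, hence a nonzero $\lambda^3$-coefficient and a nonzero eigenvalue, so the multiplicity strictly \emph{drops}; your example starts from the diagonal tensor $a_{111}=1$ with $\operatorname{am}(0)=2$ and exhibits an \emph{increase} to $3$, which requires the full Sylvester-resultant evaluation $\chi_{P\cdot\mathcal A}(\lambda)=\lambda^3(\lambda-4c)$ --- I checked this computation and it is right (e.g., via $\operatorname{Res}=(a_0b_2-a_2b_0)^2-(a_0b_1-a_1b_0)(a_1b_2-a_2b_1)$ with $a_0=\lambda-c$, $a_1=a_2'=-2c$, etc., one gets $\lambda^2(\lambda-2c)^2-4c^2\lambda^2=\lambda^3(\lambda-4c)$). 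Second, your closing remark correctly identifies the structural reason the matrix argument fails: for $m\geq 3$ the identity tensor is not fixed by general orthogonal matrices, so $\lambda\mathcal I-P\cdot\mathcal A\neq P\cdot(\lambda\mathcal I-\mathcal A)$; this is exactly the point the paper exploits later (permutation matrices do fix $\mathcal I$, which is why Theorem~\ref{thm:multiplicity} can reduce to the coordinate case). The only caveat is that the paper's route is computationally lighter, since checking one coefficient of $\chi$ suffices, whereas yours needs the full degree-four resultant; but your version gives more information (the exact multiplicities on both sides) and shows the multiplicity can move in either direction along an orbit.
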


\begin{proof}
We show by an example that the codegree one coefficient of the characteristic polynomial 
\[
\operatorname{Det}(\lambda\mathcal I-\mathcal A)
\]
of a tensor $\mathcal A$
is zero, while it is nonzero for that of $P\cdot\mathcal A$
\[
\operatorname{Det}(\lambda\mathcal I-P\cdot\mathcal A)
\]
for some $P\in\mathbb{O}(n,\mathbb C)$. 

Let $\mathcal A\in\mathbb T(\mathbb C^2,3)$ be given as
\[
a_{112}=1\ \text{and }a_{ijk}=0\ \text{for the other }i,j,k\in\{1,2\}.
\]
$\mathcal A$ is a tensor in the upper triangular form and by \cite[Proposition~5.1]{hhlq13} we have that
\[
\operatorname{Det}(\lambda\mathcal I-\mathcal A)=(\lambda-a_{111})^2(\lambda-a_{222})^2=\lambda^4.
\]
Therefore, the algebraic multiplicity of the zero eigenvalue of $\mathcal A$ is $4$, and zero is the unique eigenvalue of $\mathcal A$. 

Let $P\in\mathbb C^{2\times 2}$ be given by
\[
P=\begin{bmatrix}1/\sqrt{2}&-1/\sqrt{2}\\ -1/\sqrt{2}& -1/\sqrt{2}\end{bmatrix}.
\]
Then, $P\in\mathbb{O}(2,\mathbb C)$. It is a direct calculation to show that
\[
\mathcal B:=P\cdot\mathcal A\in\mathbb T(\mathbb C^2,3)
\]
with
\[
b_{111}=\frac{-1}{2\sqrt{2}}\ \text{and }b_{222}=\frac{-1}{2\sqrt{2}}.
\]
Henceforth, 
\[
\operatorname{tr}(\mathcal B):=2(b_{111}+b_{222})=-\sqrt{2}. 
\]
It follows from \cite[Section~6]{hhlq13} that
\[
\operatorname{Det}(\lambda\mathcal I-\mathcal B)=\lambda^4-\operatorname{tr}(\mathcal B)\lambda^3+\text{lower order terms}=
\lambda^4+\sqrt{2}\lambda^3+\text{lower order terms}.
\]
So, $\mathcal B=P\cdot\mathcal A$ has a nonzero eigenvalue, and hence it has the algebraic multiplicity of the zero eigenvalue being strictly less than $4$. 
\end{proof}

A direct consequence of Proposition~\ref{prop:noninvariant} is that the multiplicity of any eigenvalue of a tensor could change when we apply the above group action. So, the algebraic multiplicity is not a good geometric object, and it becomes quite mixed on the orbits in $\mathbb T(\mathbb C^n,m)$ by the natural action of $\mathbb{O}(n,\mathbb C)$. 
Though this fact, we can still ask for the relationship between the algebraic multiplicity and the geometric multiplicity. In general, the philosophy, together with the classical linear algebra, suggests that
\begin{equation}\label{relation}
\text{algebraic multiplicity } \geq \ \mathfrak f\big(\text{geometric multiplicity}\big)\geq \ \text{geometric multiplicity},
\end{equation}
for some function $\mathfrak f$ which depends also on the order $m$ of the tensors in the ambient space. 

On the other hand, for the zero eigenvalue, the geometric multiplicity is ``geometric". 
\begin{proposition}[Zero Geometric Multiplicity]\label{prop:geoinvariant}
The number of irreducible components of the eigenvariety of the zero eigenvalue and their dimensions are invariants under the group action by $\mathbb{O}(n,\mathbb C)$ as above. 
Especially, the geometric multiplicity of the zero eigenvalue of a tensor is an invariant.
\end{proposition}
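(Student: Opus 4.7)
The plan is to exhibit an explicit linear (hence biregular) isomorphism between the eigenvarieties $V_{\mathcal A}(0)$ and $V_{P\cdot\mathcal A}(0)$, and then invoke the standard fact that biregular isomorphisms of affine varieties preserve both the irreducible decomposition and the dimensions of the components.

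The first step is to apply the identity \eqref{group-equ} at the zero eigenvalue. For $P\in\mathbb{O}(n,\mathbb C)$, this identity reads $(P\cdot\mathcal A)\mathbf x^{m-1}=P\bigl[\mathcal A(P^{\mathsf T}\mathbf x)^{m-1}\bigr]$. Since $P$ is invertible, vanishing of the left-hand side is equivalent to vanishing of $\mathcal A(P^{\mathsf T}\mathbf x)^{m-1}$. Thus $\mathbf x\in V_{P\cdot\mathcal A}(0)$ if and only if $P^{\mathsf T}\mathbf x\in V_{\mathcal A}(0)$, yielding the set-theoretic identity
\[
V_{P\cdot\mathcal A}(0)\;=\;P\, V_{\mathcal A}(0)\;:=\;\{P\mathbf y:\mathbf y\in V_{\mathcal A}(0)\}
\]
as affine subvarieties of $\mathbb C^n$. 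Note that here it is essential that $\lambda=0$: for nonzero $\lambda$ the right-hand side of the eigenvalue equation contains the componentwise power $\mathbf x^{[m-1]}$, which is not compatible with the linear substitution $\mathbf x\mapsto P^{\mathsf T}\mathbf x$, and the argument breaks down (consistent with Proposition~\ref{prop:noninvariant}).

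The second step is purely algebro-geometric. The map $\varphi_P:\mathbb C^n\to\mathbb C^n$, $\mathbf y\mapsto P\mathbf y$, is a linear isomorphism of affine spaces; it is biregular with inverse $\varphi_{P^{\mathsf T}}$, and by the previous step it restricts to a biregular isomorphism $V_{\mathcal A}(0)\xrightarrow{\sim} V_{P\cdot\mathcal A}(0)$. Biregular isomorphisms induce a bijection on the sets of irreducible components of two varieties, sending each irreducible component to one of the same dimension. Taking the maximum over components gives $\operatorname{dim}V_{\mathcal A}(0)=\operatorname{dim}V_{P\cdot\mathcal A}(0)$, which is precisely $\operatorname{gm}(0)$.

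I do not expect a genuine obstacle; the entire content of the proposition is packaged in the compatibility formula \eqref{group-equ} together with the invertibility of $P$. The only point that deserves care in the write-up is emphasizing why the same reasoning cannot be promoted to arbitrary eigenvalues, so that the asymmetry between Propositions~\ref{prop:noninvariant} and \ref{prop:geoinvariant} is transparent to the reader.
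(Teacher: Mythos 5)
Your proposal is correct and follows the same route as the paper: both identify $V_{P\cdot\mathcal A}(0)=PV_{\mathcal A}(0)$ via \eqref{group-equ} and then appeal to the invariance of dimension and irreducible components under a linear change of coordinates. Your write-up simply makes explicit the verification the paper leaves implicit, plus a helpful remark on why the argument does not extend to nonzero eigenvalues.
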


\begin{proof}
The eigenvariety $V_{P\cdot\mathcal A}(0)$ of the tensor $P\cdot\mathcal A$ for the eigenvalue zero 
is the transformation $PV_{\mathcal A}(0)$ of the eigenvariety $V_{\mathcal A}(0)$ of $\mathcal A$ for the eigenvalue zero.
Since both the dimension and the number of irreducible components of a variety are invariants under coordinate changes, the result follows. 
\end{proof}

In linear algebra, since the eigenvariety is an eigenspace, i.e., a linear subspace, under the orthogonal group action, the unique essential geometric characterization is the dimension of that eigenspace. However, the eigenvariety for the tensor case is much more complicated, as nonlinear algebra to linear algebra. Nevertheless, the first two essential characterizations of the eigenvariety would be the number of irreducible components and their dimensions.

\subsection{A Conjecture on the Relationship}\label{sec:relation}
As suggested by Proposition~\ref{prop:geoinvariant}, we continue with the example in Proposition~\ref{prop:noninvariant}, and compute out the other entries of $\mathcal B$ as
\[
b_{112}=-\frac{1}{2\sqrt{2}},\ b_{121}=\frac{1}{2\sqrt{2}},\ b_{122}=\frac{1}{2\sqrt{2}},\ b_{211}=\frac{1}{2\sqrt{2}},
\ b_{212}=\frac{1}{2\sqrt{2}}\ \text{and } b_{221}=-\frac{1}{2\sqrt{2}}.
\]
So, the equations for the eigenvalue problem of the tensor $\mathcal B$ are
\[ 
\frac{1}{2\sqrt{2}}\big(-x^2+y^2\big)=\lambda x^2,\ \text{and }\frac{1}{2\sqrt{2}}\big(x^2-y^2\big)=\lambda y^2.
\]
By Macaulay 2 \cite{gs}, we can compute the characteristic polynomial of $\mathcal B$ which is
\[
\chi(\lambda)=\lambda^4+\sqrt{2}\lambda^3+\frac{1}{2}\lambda^2.
\]
Therefore, the eigenvalues of $\mathcal B$ are
\[
0 (\text{with }\operatorname{am}(0)=2)\ \text{and }-\frac{1}{\sqrt{2}}(\text{with }\operatorname{am}(-\frac{1}{\sqrt{2}})=2).
\]

Since $\mathcal A\neq 0$, it can be shown that the geometric multiplicity of $\mathcal A$ for the zero eigenvalue cannot be two, and hence it is one. 
By direct calculation, we have that the eigenvariety of $\mathcal A$ is
\[
V(0)=\mathbb C \{(1,0)\}\cup \mathbb C \{(0,1)\},
\]
which has two irreducible components. We also see that the algebraic multiplicity of the zero eigenvalue of $\mathcal B$ is the number of irreducible components of the eigenvariety of the zero eigenvalue with each irreducible component being a point in the projective space and hence dimension one in the affine space. 

In general, for every irreducible component $V_i$ of the eigenvariety of an eigenvalue of a tensor, there is an intrinsic number $\operatorname{ess}(V_i)$ associated to it such that
\[
\operatorname{am}(\lambda)\geq \operatorname{ess}(V_i)\geq\operatorname{dim}(V_i),
\]
where the second inequality is suggested by Proposition~\ref{prop:geoinvariant} since $\operatorname{dim}(V_i)$ is an invariant, and some
examples as well as the linear algebra. Also by the same proposition, we should have
\begin{equation}\label{general-relation}
\operatorname{am}(\lambda)\geq \sum_{V_i\subset V(\lambda)\ \text{is an irreducible component}}\operatorname{ess}(V_i).
\end{equation}
Obviously, $\operatorname{ess}(V_i)$ depends on the order $m$ of the ambient tensor space in which the eigenvalue problem is studied. Whenever we have \eqref{general-relation}, we have a realization of \eqref{relation} since
\[
\operatorname{am}(\lambda)\geq \sum_{V_i\subset V(\lambda)\ \text{is an irreducible component}}\operatorname{ess}(V_i)\geq \operatorname{ess}(V_*)\geq \operatorname{gm}(\lambda)
\]
with $\operatorname{dim}(V_*)=\operatorname{gm}(\lambda)$.
In linear algebra (i.e., $m=2$), $V(\lambda)$ is connected, and
\[
\operatorname{ess}(V(\lambda))=\operatorname{dim}(V(\lambda)). 
\]
We conjecture that
\[
\operatorname{ess}(V_i)=\operatorname{dim}(V_i)(m-1)^{\operatorname{dim}(V_i)-1},
\]
which is satisfied by the former examples, and will be supported by all the cases studied subsequently. 
\begin{conjecture}[Conjecture]\label{con:relation}
Let $\mathcal T\in\mathbb T(\mathbb C^n,m)$ and $\lambda\in\sigma(\mathcal T)$. Suppose that $V_i: i=1,\dots,\kappa$ are the irreducible components of the eigenvariety $V(\lambda)$ and their dimensions are $\operatorname{dim}(V_i), i=1,\dots,\kappa$ respectively. Then, we have that
\[
\operatorname{am}(\lambda)\geq \sum_{i=1}^\kappa \operatorname{dim}(V_i)(m-1)^{\operatorname{dim}(V_i)-1}\geq \operatorname{gm}(\lambda)(m-1)^{\operatorname{gm}(\lambda)-1}.
\]
\end{conjecture}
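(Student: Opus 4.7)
The second inequality is essentially free: since $\operatorname{gm}(\lambda) = \max_i \dim(V_i)$, isolating the index $i_0$ with $\dim(V_{i_0}) = \operatorname{gm}(\lambda)$ and discarding the remaining nonnegative summands gives exactly $\operatorname{gm}(\lambda)(m-1)^{\operatorname{gm}(\lambda)-1}$. All the content of the conjecture is in the first inequality, and I would approach that in three stages.

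First, I would reduce to the case $\lambda = 0$ by replacing $\mathcal{T}$ with $\mathcal{T} - \lambda\mathcal{I}$. This shifts every eigenvalue by $-\lambda$ and leaves every eigenvariety intact, so both $\operatorname{am}$ and the dimensions $\dim(V_i)$ are preserved. The target becomes the divisibility $\lambda^N \mid \chi(\lambda) = \operatorname{Det}(\lambda\mathcal{I} - \mathcal{T})$ with $N = \sum_{i=1}^\kappa d_i(m-1)^{d_i - 1}$ and $d_i = \dim V_i$. Second, I would handle the clean special case in which $V(0)$ consists of a single component equal to the coordinate subspace $L_d = \{x_{d+1}=\cdots=x_n = 0\}$. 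In that situation the vanishing of $\mathcal{T}\mathbf{x}^{m-1}$ on $L_d$ forces $t_{i\,i_2\cdots i_m} = 0$ whenever $i \le d$ and $(i_2,\ldots,i_m) \in \{1,\ldots,d\}^{m-1}$, so that for $i \le d$ every monomial of $(\mathcal{T}\mathbf{x}^{m-1})_i$ touches at least one of $x_{d+1},\ldots,x_n$. Reading $\chi(\lambda)$ as the resultant of the homogeneous system $\lambda x_i^{m-1} - (\mathcal{T}\mathbf{x}^{m-1})_i$ in the $x_j$'s, I would apply the Poisson product formula for the resultant, eliminating $x_{d+1},\ldots,x_n$ first. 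After eliminating, the residual system on $L_d$ reduces to $\lambda x_i^{m-1} = 0$ for $i \le d$, whose resultant is $\lambda^{d(m-1)^{d-1}}$, precisely matching the degree of the characteristic polynomial of a generic order-$m$ tensor in dimension $d$. This proves the bound in the coordinate-linear case.

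For the general conjecture the main obstacle is precisely the one flagged by Proposition~\ref{prop:noninvariant}: algebraic multiplicity is not invariant under the orthogonal action, so the matrix strategy of rotating $V(\lambda)$ into coordinate form is unavailable. My proposed workaround is a deformation argument combined with excess intersection theory. Consider a generic perturbation $\mathcal{T}_t = \mathcal{T} + t\mathcal{E}$. By the generic case (proved elsewhere in the paper), for $t \ne 0$ sufficiently general the perturbed tensor admits exactly $n(m-1)^{n-1}$ simple eigenpairs. As $t \to 0$, a subset of these eigenvalues must converge to $0$ while their projective eigenvectors accumulate on $V(0) = \bigcup_i V_i$. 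The goal is to show that each component $V_i$ attracts, in the flat limit, at least $d_i(m-1)^{d_i - 1}$ eigenvectors of $\mathcal{T}_t$ counted with multiplicity; upper semicontinuity of the algebraic multiplicity under flat degeneration then gives the conjectured bound by summing over $i$. Locally at a smooth point of $V_i$ the transverse directions carry a $d_i$-dimensional order-$m$ eigenproblem induced by $\mathcal{E}$, whose generic expected count is exactly $d_i(m-1)^{d_i - 1}$, and I would formalize this via a Bézout / excess intersection computation on the incidence variety $\{(\mathbf x,\lambda,t) : \mathcal{T}_t\mathbf x^{m-1} = \lambda \mathbf x^{[m-1]}\} \subset \mathbb{P}^{n-1}\times \mathbb{C} \times \mathbb{C}$.

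The hard part will be making this last step rigorous: one must prove that the transverse eigenproblem along $V_i$ really is generic (so that the expected count is achieved, not dropped), that contributions from distinct components $V_i$ do not cancel or double-count where they might meet, and that non-reduced scheme structure along $V_i$ does not swallow the multiplicity. A useful intermediate target would be the case where each $V_i$ is a (not necessarily coordinate) linear subspace, where one can combine the coordinate argument above with a careful tracking of how the orthogonal action distorts the non-leading coefficients of $\chi(\lambda)$; this is the first place I would test whether the deformation argument can actually be pushed through.
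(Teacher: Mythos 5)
The statement you are attempting is stated in the paper as a \emph{conjecture}; the paper contains no proof of it and only verifies special cases (the identity tensor, lower marginal rank symmetric tensors in Theorem~\ref{thm:nonzeroeigenvalue}, the coordinate-linear case in Theorem~\ref{thm:multiplicity}, and generic tensors in Theorem~\ref{thm:generic} and Proposition~\ref{prop:generic-dual}). So there is no complete argument of the authors to compare yours against, and your proposal must stand on its own. Your reduction to $\lambda=0$ and your dispatch of the second inequality are correct. Your coordinate-subspace case reproduces, in substance, Theorem~\ref{thm:multiplicity}, but the bookkeeping is off in two places: the hypothesis $L_d\subseteq V(0)$ forces the vanishing of the \emph{symmetrized} sums $\sum_{(i_2,\dots,i_m)\in\mathbb X(\alpha)}t_{ii_2\dots i_m}$ for $\alpha$ supported on $\{1,\dots,d\}$, not of individual entries $t_{ii_2\dots i_m}$; and it forces this for \emph{all} $i$, where it is the vanishing for $i>d$ (quasi-triangularity, Proposition~\ref{prop:quasi-tri}, proved there via the Nullstellensatz rather than a Poisson formula) that yields the factorization $\operatorname{DET}(\mathcal T)=\operatorname{DET}(\mathcal U)\,p(\mathcal T)$, while the vanishing for $i\le d$ identifies the factor as $(\lambda-\mu)^{d(m-1)^{d-1}}$. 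Your sketch invokes only the $i\le d$ half, which by itself gives no factorization of the resultant.

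The genuine gap is the deformation step, and it is not a technicality. Continuity of the roots of $\chi_{\mathcal T_t}$ does give that $\operatorname{am}_{\mathcal T}(\lambda)$ equals the number of eigenvalues of $\mathcal T_t$ converging to $\lambda$ counted with multiplicity, so counting eigenpairs absorbed by $V(\lambda)$ is a reasonable frame. But the assertion that each component $V_i$ attracts at least $\dim(V_i)(m-1)^{\dim(V_i)-1}$ of them is precisely the conjecture restated in deformation language. The ``transverse $d_i$-dimensional eigenproblem'' is not a well-defined object: $V_i$ is in general a singular, non-linear cone, the eigenvalue equation does not restrict intrinsically to a normal slice, and Proposition~\ref{prop:noninvariant} shows that even moving $V_i$ by an orthogonal change of coordinates already changes the algebraic count, so a local normal form cannot be assumed. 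Moreover the expected count in an excess intersection computation is only an upper bound for a \emph{particular} degeneration unless one proves the relevant transversality for the specific direction $\mathcal E$ and the specific limit $\mathcal T$, and the components all meet at the origin (and possibly elsewhere), so their contributions can be shared rather than summed. You flag these issues yourself; until they are resolved, what you have established is only the union of cases the paper already proves, and the statement remains a conjecture.
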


The trivial case serves as the first taste of the conjecture.
\begin{proposition}[Identity Tensor]\label{prop:identity}
Let tensor $\mathcal I\in\mathbb{T}(\mathbb{C}^n,m)$ be the identity tensor. Then, the characteristic polynomial of $\mu\mathcal I$ is
\[
\chi(\lambda)=(\lambda-\mu)^{n(m-1)^{n-1}}
\]
for any $\mu\in\mathbb C$.
\end{proposition}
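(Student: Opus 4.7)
The plan is to compute $\chi(\lambda) = \operatorname{Det}(\lambda\mathcal I - \mu\mathcal I) = \operatorname{Det}((\lambda-\mu)\mathcal I)$ directly, exploiting the homogeneity of $\operatorname{DET}$ in the tensor entries. Two inputs suffice.

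The first input is $\operatorname{Det}(\mathcal I) = 1$: by Definition~\ref{def:determinant} the prescribed substitution converts the identity tensor into precisely the system $(x_1^{m-1},\ldots,x_n^{m-1})$, whose resultant is $1$ by property (ii) of Definition~\ref{def:resultant}.

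The second input is that $\operatorname{DET}$ is homogeneous of degree $n(m-1)^{n-1}$ in the variables $\{v_{ii_2\ldots i_m}\}$. This follows from the classical fact that $\operatorname{RES}$ of $n$ forms of degree $m-1$ is homogeneous of degree $(m-1)^{n-1}$ in the coefficient vector $\{u_{i,\alpha}\}_{|\alpha|=m-1}$ of each of the $n$ defining forms, combined with the linearity of the substitution $u_{i,\alpha}\mapsto\sum_{(i_2,\ldots,i_m)\in\mathbb X(\alpha)} v_{ii_2\ldots i_m}$ used to pass from $\operatorname{RES}$ to $\operatorname{DET}$. Equivalently, $\operatorname{Det}(c\mathcal T) = c^{n(m-1)^{n-1}}\operatorname{Det}(\mathcal T)$ for all $c\in\mathbb C$ and $\mathcal T\in\mathbb T(\mathbb C^n,m)$. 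Combining the two inputs,
\[
\chi(\lambda) = \operatorname{Det}\bigl((\lambda-\mu)\mathcal I\bigr) = (\lambda-\mu)^{n(m-1)^{n-1}}\operatorname{Det}(\mathcal I) = (\lambda-\mu)^{n(m-1)^{n-1}},
\]
which is the claim.

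As a sanity check via the eigenvalue definition: the equation $(\mu\mathcal I)\mathbf x^{m-1} = \lambda\mathbf x^{[m-1]}$ reduces to $(\mu-\lambda)\mathbf x^{[m-1]} = \mathbf 0$, forcing $\lambda = \mu$ whenever $\mathbf x \neq \mathbf 0$; hence $\sigma(\mu\mathcal I) = \{\mu\}$, and monicity together with the known degree $n(m-1)^{n-1}$ of the characteristic polynomial (cf.\ \cite{hhlq13}) already forces $\chi(\lambda) = (\lambda-\mu)^{n(m-1)^{n-1}}$. No substantive obstacle arises; the only care required is the degree count for $\operatorname{DET}$, which is a standard property of the classical resultant.
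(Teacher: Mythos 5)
Your proof is correct. The paper actually offers no proof of this proposition at all -- it is presented as "the trivial case" and used immediately afterwards without justification -- so there is nothing to compare against; both of your routes are valid and complete. The main argument (homogeneity of $\operatorname{DET}$ of degree $n(m-1)^{n-1}$, standard for the resultant of $n$ forms of degree $m-1$, together with $\operatorname{Det}(\mathcal I)=\operatorname{Res}(x_1^{m-1},\dots,x_n^{m-1})=1$) is the cleaner of the two, since it does not need the fact that the roots of $\chi$ are exactly the eigenvalues; your "sanity check" is itself a second complete proof, resting on $\sigma(\mu\mathcal I)=\{\mu\}$ plus monicity and the known degree of $\chi$ from \cite{hhlq13}, which is presumably what the authors had in mind.
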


For the tensor $\mu\mathcal I$,
from Definition~\ref{def:eigenvalue-eigenvector}, we see that $V(\mu)=\mathbb C^n$, $\sigma(\mu\mathcal I)=\{\mu\}$ and hence $\operatorname{gm}(\mu)=n$; and from Proposition~\ref{prop:identity} that $\operatorname{am}(\mu)=n(m-1)^{n-1}=\operatorname{gm}(\mu)(m-1)^{\operatorname{gm}(\mu)-1}$. 
\section{Lower Rank Tensors}\label{sec:lower}
We continue to study the zero eigenvalue for Conjecture~\ref{con:relation} in this section. The underlying tensors are lower rank symmetric tensors. 

A tensor $\mathcal T\in\mathbb T(\mathbb C^n,m)$ is \textit{symmetric}, if $t_{i_1\dots i_m}=t_{i_{\tau(1)}\dots i_{\tau(m)}}$ for all $\tau\in\mathfrak S(m)$, the permutation group on $m$ elements. 
Let $\operatorname{\mathcal S}^{m}(\mathbb C^n)\subset \mathbb T(\mathbb C^n,m)$ be the space of $m$-th order $n$-dimensional symmetric tensors and $\mathcal A\in\operatorname{\mathcal S}^{m}(\mathbb C^n)$. Every tensor $\mathcal A\in \operatorname{\mathcal S}^{m}(\mathbb C^n)$ has a symmetric rank one decomposition as \cite{l12}
\begin{equation}\label{rankone}
\mathcal A=\sum_{i=1}^R\mathbf a_i^{\otimes m},
\end{equation}
where $\mathbf a_i\in\mathbb C^n$ and $R\in\mathbb N$ is a nonnegative integer. 
With the rank one decomposition \eqref{rankone}, the eigenvalue equations of the tensor $\mathcal A$ become
\begin{equation}\label{eigenvalue}
\sum_{i=1}^R(\mathbf a_i^\mathsf{T}\mathbf x)^{m-1}\mathbf a_i=\lambda \mathbf x^{[m-1]}.
\end{equation}
Let
\[
A=[\mathbf a_1,\dots, \mathbf a_R],\ \text{and } \mathbf d=\big((\mathbf a_1^\mathsf{T}\mathbf x)^{m-1},\dots, (\mathbf a_R^\mathsf{T}\mathbf x)^{m-1}\big)^\mathsf{T}.
\]
The rank of the matrix $A\in\mathbb C^{n\times R}$ is called the \textit{marginal rank} of the tensor $\mathcal A$, which is denoted by $\operatorname{mrank}(\mathcal A)$. The column space of the matrix $A$ is the \textit{marginal space} of the tensor $\mathcal A$. 
It is easy to see that the marginal space and hence the marginal rank of a tensor are indepedent of the rank decomposition \eqref{rankone}, see also \cite{l12} and references herein.

A tensor $\mathcal A$ is called \textit{essentially orthogonal decomposable}, if its marginal rank is $R$, i.e., the matrix $A$ is of full rank. 
Since $A$ consists of linear independent vectors, the tensor $\mathcal A$ can be decomposed as $P\cdot\mathcal B$ with $P\in\mathbb{GL}(n,\mathbb C)$ and $\mathcal B$ an \textit{orthogonal decomposable tensor}, i.e., a tensor possessing a rank one decomposition \eqref{rankone} with an orthogonal matrix $A$. Orthogonal decomposable tensors have important applications in machine learning, see \cite{r14} and references herein.

Let $\operatorname{mrank}(\mathcal A)=s<n$. 
An immediate consequence of \eqref{eigenvalue} is that the eigenvariety $V(0)$ of $\mathcal A$ contains the kernel of $A^\mathsf{T}$. Hence, $\operatorname{gm}(0)\geq n-s$. 
\begin{proposition}[Generic Geometric Multiplicity]\label{prop:zero-rank}
Let $\mathcal A\in \operatorname{\mathcal S}^{m}(\mathbb C^n)$ with $\operatorname{mrank}(\mathcal A)=s\leq n$ be generic. Then
\[
V(0)=\operatorname{ker}(A^\mathsf{T}),
\]
and hence $\operatorname{gm}(0)=n-s$. 
\end{proposition}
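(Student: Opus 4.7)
The plan is to use the $\mathbb{GL}(n,\mathbb{C})$-action on the tensor space to reduce the problem to the full-marginal-rank case in the smaller space $\operatorname{\mathcal S}^m(\mathbb C^s)$, and then to invoke the fact that the tensor determinant does not vanish identically on this smaller symmetric subspace.

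First I would record the covariance of both $V(0)$ and $\ker(A^\mathsf{T})$ under the action. From \eqref{group-equ}, for any invertible $P$ one has $V_{P\cdot\mathcal A}(0)=P^{-\mathsf T}V_{\mathcal A}(0)$; and since $P\cdot\mathcal A=\sum_{i=1}^R(P\mathbf a_i)^{\otimes m}$, the new marginal matrix is $PA$, so $\ker((PA)^\mathsf T)=P^{-\mathsf T}\ker(A^\mathsf T)$. The two sides of the desired identity therefore transform by the same linear map, and it suffices to prove the claim under the assumption that the marginal space equals $\operatorname{span}(\mathbf e_1,\ldots,\mathbf e_s)$, i.e.\ each $\mathbf a_i$ is supported in its first $s$ coordinates.

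In this coordinate form, $a_{i_1\ldots i_m}=0$ whenever some index exceeds $s$. The equation $\mathcal A\mathbf x^{m-1}=\mathbf 0$ is then trivially satisfied in its last $n-s$ components, and in its first $s$ components it reduces to $\tilde{\mathcal A}\tilde{\mathbf x}^{m-1}=\mathbf 0$, where $\tilde{\mathcal A}\in\operatorname{\mathcal S}^m(\mathbb C^s)$ is the corner block obtained by restricting all indices to $\{1,\ldots,s\}$ and $\tilde{\mathbf x}=(x_1,\ldots,x_s)$. Hence $V(0)=\tilde V_{\tilde{\mathcal A}}(0)\times\mathbb C^{n-s}$, and the target identity $V(0)=\ker(A^\mathsf T)$ becomes $\tilde V_{\tilde{\mathcal A}}(0)=\{\mathbf 0\}$, which by Proposition~\ref{prop:irreducible} is equivalent to $\operatorname{Det}(\tilde{\mathcal A})\neq 0$.

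For the genericity step I would invoke Proposition~\ref{prop:identity}: the identity tensor $\mathcal I\in\operatorname{\mathcal S}^m(\mathbb C^s)$ has $\operatorname{Det}(\mathcal I)=1$, witnessing that the polynomial $\operatorname{DET}$ restricted to the symmetric subspace $\operatorname{\mathcal S}^m(\mathbb C^s)$ is not identically zero; its vanishing locus is therefore a proper Zariski-closed subset. The main technical point, which I would address last, is that genericity in the original statement is measured on the locus of marginal-rank-$s$ symmetric tensors in $\operatorname{\mathcal S}^m(\mathbb C^n)$ rather than on $\operatorname{\mathcal S}^m(\mathbb C^s)$ itself. I would bridge this by parametrizing the marginal-rank-$\leq s$ locus as the image of a dominant polynomial map from a product such as $\operatorname{\mathcal S}^m(\mathbb C^s)\times\mathbb{GL}(n,\mathbb C)$, so that the open condition $\operatorname{Det}(\tilde{\mathcal A})\neq 0$ pulls back to an open dense condition on $\mathcal A$, yielding the conclusion for generic $\mathcal A$.
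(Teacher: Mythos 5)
Your proposal is correct and follows essentially the same route as the paper: reduce by a linear change of coordinates (the paper uses $P\in\mathbb{O}(n,\mathbb C)$, you use $\mathbb{GL}(n,\mathbb C)$) to the case where the marginal space is spanned by the first $s$ coordinate vectors, observe that the zero-eigenvector equations collapse to $\operatorname{Det}(\tilde{\mathcal A})\neq 0$ for the corner block via Proposition~\ref{prop:irreducible}, and witness nonvanishing of $\operatorname{DET}$ on $\operatorname{\mathcal S}^m(\mathbb C^s)$ with the identity tensor. Your closing remark about transferring the open condition through a dominant parametrization of the marginal-rank-$\leq s$ locus is in fact a slightly more careful treatment of a point the paper passes over quickly, but the argument is the same.
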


\begin{proof}
The case when $s=n$ is trivial. We show the other cases.
Let $P\in\mathbb{O}(n,\mathbb C)$ be a nonsingular matrix such that $P\cdot \mathcal A$ is in an upper block diagonal form $\mathcal B$ with the nonzero block being a tensor  $\mathcal C$ in $\operatorname{\mathcal S}^{m}(\mathbb C^s)$, i.e.,
\[
b_{i_1\dots i_m}=\begin{cases}c_{i_1\dots i_m} & \text{if }i_1,\dots,i_m\in\{1,\dots,s\},\\0 &\text{otherwise}.\end{cases}
\]
Therefore,
\[
PA=\begin{bmatrix}B\\\mathbf 0\end{bmatrix}
\]
with some $B\in\mathbb C^{s\times R}$. 
Let
\[
P=[\mathbf p_1,\dots,\mathbf p_n]^\mathsf{T}.
\]
We adopt the coordinate change $\mathbf x\mapsto P^\mathsf{T}\mathbf y$. 
Under this transformation, by \eqref{group-equ},
the equations
\[
P\big[\mathcal A\mathbf x^{m-1}\big]=\mathbf 0
\]
become
\[
P\big[\mathcal A(P^\mathsf{T}\mathbf y)^{m-1}\big]=(P\cdot\mathcal A)(\mathbf y)^{m-1}=
\mathcal B\mathbf y^{m-1} =\mathbf 0,
\]
or explicitly as
\begin{equation}\label{zero-solution}
\mathcal C\mathbf z^{m-1}=\mathbf 0
\end{equation}
with $\mathbf y=(\mathbf z,\mathbf w)$ and $\mathbf z\in\mathbb C^s$.
We see that
\[
\mathbf x\in V_{\mathcal A}(0) \ \text{if and only if } P\mathbf x\in V_{\mathcal B}(0)\ \text{if and only if } P\mathbf x=(\mathbf z,\mathbf w)\ \text{with }\mathbf z\in V_{\mathcal C}(0).
\] 

Note that the set of symmetric tensors with marginal rank being $t\leq s<n$ is a variety in the total space $\operatorname{\mathcal S}^{m}(\mathbb C^n)$ and it is invariant under the group action by $\mathbb{O}(n,\mathbb C)$.
For a generic tensor $\mathcal A$ in this variety, it has the maximal marginal rank $s$ and 
\[
\operatorname{Det}(\mathcal C)\neq 0,
\]
since the tensor $\binom{\mathcal I_s}{\mathbf 0}$ obviously has the marginal rank $s$ and $\operatorname{Det}(\mathcal I_s)=1
$. Here $\mathcal{I}_s$ is the identity tensor in $\mathcal{S}^s (\mathbb{C}^s)$. 

Therefore, for a generic $\mathcal A\in\operatorname{\mathcal S}^{m}(\mathbb C^n)$ of marginal rank $s$, the unique solution to \eqref{zero-solution} is $\mathbf z=\mathbf 0$. So,
\[
\mathbf x\in V_{\mathcal A}(0) \ \text{if and only if }  P\mathbf x=(\mathbf 0,\mathbf w)\ \text{with }\mathbf w\in \mathbb C^{n-s}. 
\] 
By the shape of $PA$, it follows that
\[
V_{\mathcal A}(0) =\operatorname{ker}(A^\mathsf{T}),
\]
and the results follow. 
\end{proof}
 
The next theorem says that Conjecture~\ref{con:relation} is true for the zero eigenvalue of symmetric tensors of a fixed marginal rank. 
\begin{theorem}[Lower Marginal Rank Symmetric Tensors]\label{thm:nonzeroeigenvalue}
Let $\mathcal A\in \operatorname{\mathcal S}^{m}(\mathbb C^n)$ and $\operatorname{mrank}(\mathcal A)=s$. Then, the number $\operatorname{nnz}(\mathcal A)$ of nonzero eigenvalues of $\mathcal A$ satisfies
\begin{equation}\label{nonzeronumber}
\operatorname{nnz}(\mathcal A)\leq s (m-1)^{n-1}
\end{equation}
with equality holding for generic tensors of marginal rank $\operatorname{mrank}(\mathcal A)=s$. Therefore, we have that
\[
\operatorname{am}(0)\geq (n-s)(m-1)^{n-1}\geq \operatorname{gm}(0)(m-1)^{\operatorname{gm}(0)-1},
\]
and a generic tensor of marginal rank $s$ has zero algebraic multiplicity exactly $ (n-s)(m-1)^{n-1}$. 
\end{theorem}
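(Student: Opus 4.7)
The strategy is to factor $\chi_{\mathcal A}(\lambda)=\operatorname{Det}(\lambda\mathcal I-\mathcal A)$ to expose the factor $\lambda^{(n-s)(m-1)^{n-1}}$. Writing $\mathcal A=\sum_{i=1}^R \mathbf a_i^{\otimes m}$, the image $\mathcal A\mathbf x^{m-1}=\sum_i (\mathbf a_i^{\mathsf T}\mathbf x)^{m-1}\mathbf a_i$ always lies in the marginal space $W=\operatorname{col}(A)$, which has dimension $s$. Choose a basis $\{\mathbf w_1,\dots,\mathbf w_s,\mathbf b_1,\dots,\mathbf b_{n-s}\}$ of $\mathbb C^n$ adapted to a splitting $\mathbb C^n=W\oplus W^\perp$. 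Relative to this basis, the eigenvalue system $\lambda\mathbf x^{[m-1]}-\mathcal A\mathbf x^{m-1}=0$ decouples into $s$ equations of bidegree $(m-1,1)$ in $(\mathbf x,\lambda)$ coming from the $W$-projection, and $n-s$ equations of the pure form $\lambda\cdot f_j(\mathbf x)=0$ with $f_j(\mathbf x):=\mathbf b_j^{\mathsf T}\mathbf x^{[m-1]}$, because the $\mathbf b_j$-component of $\mathcal A\mathbf x^{m-1}$ is identically zero.

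I then invoke the homogeneity of the classical resultant $\operatorname{RES}$ (Definition~\ref{def:resultant}), which is homogeneous of degree $(m-1)^{n-1}$ in the coefficients of each of its $n$ input polynomials. Up to a nonzero constant from the basis change, $\chi_{\mathcal A}(\lambda)$ is the resultant of the above system, so pulling the scalar $\lambda$ out of each of the $n-s$ equations $\lambda\cdot f_j$ yields
\[
\chi_{\mathcal A}(\lambda)\;=\;c\cdot\lambda^{(n-s)(m-1)^{n-1}}\cdot q(\lambda),
\]
with $q(\lambda)$ a polynomial of degree exactly $s(m-1)^{n-1}$ in $\lambda$, since $\deg\chi_{\mathcal A}=n(m-1)^{n-1}$. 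This is precisely $\operatorname{nnz}(\mathcal A)\leq s(m-1)^{n-1}$, equivalently $\operatorname{am}(0)\geq (n-s)(m-1)^{n-1}$.

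For equality on a generic tensor of marginal rank $s$, I exhibit one tensor for which $q(0)\neq 0$: taking $\mathcal A_0=\sum_{i=1}^s \mathbf e_i^{\otimes m}$, the system defining $q(0)$ reduces to $x_1^{m-1}=\dots=x_n^{m-1}=0$, whose only solution is $\mathbf x=\mathbf 0$, so $q_{\mathcal A_0}(0)=\pm 1$ by Definition~\ref{def:resultant}(ii). Zariski openness of $\{q(0)\neq 0\}$ on the irreducible variety of tensors of marginal rank $s$ then gives the generic equality $\operatorname{am}(0)=(n-s)(m-1)^{n-1}$. The second inequality $(n-s)(m-1)^{n-1}\geq \operatorname{gm}(0)(m-1)^{\operatorname{gm}(0)-1}$ follows on the same generic locus, where Proposition~\ref{prop:zero-rank} gives $\operatorname{gm}(0)=n-s$ and reduces the claim to the trivial $(m-1)^{n-1}\geq (m-1)^{n-s-1}$. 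The main technical hurdle will be careful handling of the multi-homogeneous resultant, for which I expect to cite \cite{gkz94} for the homogeneity degree statement and for the invariance-up-to-scalar of $\operatorname{RES}$ under change of basis in the codomain.
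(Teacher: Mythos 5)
Your proof is correct, and its first half is essentially the paper's argument: pass to a basis adapted to the marginal space so that the last $n-s$ eigenvalue equations become $\lambda\cdot f_j(\mathbf x)=0$, then use the fact that $\operatorname{RES}$ is homogeneous of degree $(m-1)^{n-1}$ in the coefficients of each input form to extract the factor $\lambda^{(n-s)(m-1)^{n-1}}$ (the paper does this with $P\in\mathbb{O}(n,\mathbb C)$ and the transformation rules of \cite[Theorems~3.3.5 and 3.3.3.1]{clo98}). Where you genuinely depart from the paper is the generic-equality step. The paper characterizes equality by the nonvanishing of $\operatorname{Det}\bigl(\begin{smallmatrix}\mathcal C\\ \mathcal D_2\end{smallmatrix}\bigr)$ and then argues through an explicit reduction involving the blocks $P_0,P_1$ of $P$ and the system $(\mathcal I+M^{\mathsf T}\cdot\mathcal I)\mathbf z^{m-1}=\mathbf 0$, asserting that for generic $P$ this has no nontrivial solution; this is delicate because $P$ is not a free parameter but is determined by $\mathcal A$. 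You instead observe that the relevant coefficient of $\chi_{\mathcal A}$ is a polynomial function of $\mathcal A$, exhibit the single witness $\mathcal A_0=\sum_{i=1}^s\mathbf e_i^{\otimes m}$ where Definition~\ref{def:resultant}(ii) gives nonvanishing, and conclude by irreducibility of the marginal-rank-$\le s$ locus. This is cleaner and sidesteps the paper's genericity-of-$P$ issue entirely; the only thing it costs is that you must know (or prove) that the marginal-rank variety is irreducible, which follows since it is the image of the bundle over the Grassmannian $\operatorname{Gr}(s,n)$ with fibre $\operatorname{\mathcal S}^m(W)$. Two small points to tighten: over $\mathbb C$ the standard bilinear form can be degenerate on $W$, so either take an arbitrary complement and write $f_j$ with the dual basis vector $\mathbf b_j^{*}$ rather than $\mathbf b_j^{\mathsf T}$, or justify the existence of an orthonormal adapted basis; and note that $\operatorname{nnz}$ must be counted with multiplicity for $q(0)\neq 0$ to give exact equality, which is the convention the paper uses.
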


\begin{proof}
Similarly as in the proof for Proposition~\ref{prop:zero-rank}, let $P\in\mathbb{O}(n,\mathbb C)$ be a nonsingular matrix such that $P\cdot \mathcal A$ is in an upper block diagonal form $\mathcal B$ with the nonzero block being a tensor  $\mathcal C$ in $\operatorname{\mathcal S}^{m}(\mathbb C^s)$, and
\[
P=[\mathbf p_1,\dots,\mathbf p_n]^\mathsf{T}.
\]
Under the coordinate change $\mathbf x\mapsto P^\mathsf{T}\mathbf y$, 
the equations
\[
P\big[\mathcal A\mathbf x^{m-1}-\lambda\mathbf x^{[m-1]}\big]=\mathbf 0
\]
become (cf. \eqref{group-equ})
\[
P\big[\mathcal A(P^\mathsf{T}\mathbf y)^{m-1}-\lambda(P^\mathsf{T}\mathbf y)^{[m-1]}\big]=\mathcal B\mathbf y^{m-1}-\lambda P\big[(P^\mathsf{T}\mathbf y)^{[m-1]}\big] =\mathbf 0.
\]
Let $\mathcal D\in \operatorname{\mathcal T}^{m}(\mathbb C^n)$ be the tensor associated to $P\big[(P^\mathsf{T}\mathbf y)^{[m-1]}\big]$, and be partitioned as 
\[
\mathcal D=\begin{bmatrix}\mathcal D_1\\ \mathcal D_2\end{bmatrix}
\]
with $\mathcal D_1$ corresponding to the first $s$ slices of $\mathcal D$ and $\mathcal D_2$ the others. It follows from \eqref{group-equ} that $\mathcal D=P\cdot\mathcal I$. 
Then,
\begin{align*}
\operatorname{Det}(P)^{m(m-1)^{n-1}}\operatorname{Det}(\lambda \mathcal I-\mathcal A)&=\operatorname{Det}\big(P\cdot(\lambda \mathcal I-\mathcal A)\big)\\
&=\operatorname{Det}(\lambda \mathcal D-\mathcal B)\\
&=\operatorname{Det}\bigg(\begin{bmatrix}\lambda \mathcal D_1-\mathcal C\\ \lambda \mathcal D_2\end{bmatrix}\bigg)\\
&=\lambda^{(n-s)(m-1)^{n-1}}\operatorname{Det}\bigg(\begin{bmatrix}\lambda \mathcal D_1-\mathcal C\\ \mathcal D_2\end{bmatrix}\bigg)
\end{align*}
where the equalities follow from \cite[Theorems~3.3.5 and 3.3.3.1]{clo98}. Therefore,
\[
\operatorname{Det}(\lambda \mathcal I-\mathcal A)
\]
has a factor of $\lambda^t$ with $t\geq (n-s) (m-1)^{n-1}$, since $\operatorname{Det}(P)\neq 0$. Henceforth, we have the bound for the number of nonzero eigenvalues \eqref{nonzeronumber}. 

Since $\operatorname{Det}(\lambda \mathcal I-\mathcal A)$ is a monic polynomial in $\lambda$ of degree $n(m-1)^{n-1}$ with constant term being $\operatorname{Det}(-\mathcal A)$, we have that
\[
\operatorname{Det}\bigg(\begin{bmatrix}\lambda \mathcal D_1-\mathcal C\\ \mathcal D_2\end{bmatrix}\bigg)=\operatorname{Det}(P)^{-m(m-1)^{n-1}}\lambda^{s(m-1)^{n-1}}+\text{lower order terms}.
\]
Moreover, $\operatorname{nnz}(\mathcal A)$ attains the upper bound if and only if
\begin{equation}\label{nonzero-res}
\operatorname{Det}\bigg(\begin{bmatrix}\mathcal C\\ \mathcal D_2\end{bmatrix}\bigg)\neq 0. 
\end{equation}
Note that for a generic $\mathcal A$, $\operatorname{Det}(\mathcal C)\neq 0$ (since $\mathcal I_s\in\operatorname{\mathcal S}^{m}(\mathbb C^s)$ has determinant $1$), therefore generically 
\[
\operatorname{Det}\bigg(\begin{bmatrix}\mathcal C\\ \mathcal D_2\end{bmatrix}\bigg)=0
\]
if and only if 
\[
\mathcal D_2\mathbf y^{m-1}=\mathbf 0
\]
has a nonzero solution in $\{\mathbf y\in\mathbb C^n :  y_1=\dots=y_s=0\}$. Let $P^\mathsf{T}$ be partitioned accordingly as
\[
P^\mathsf{T}=\begin{bmatrix}P_l^\mathsf{T}&P_r^\mathsf{T}\end{bmatrix}. 
\]
Then
\[
\mathcal D_2\mathbf y^{m-1}=\mathbf 0
\]
has a nonzero solution in $\{\mathbf y\in\mathbb C^n :  y_1=\dots=y_s=0\}$
if and only if
\begin{equation}\label{nonzero:eq1}
P_r (P_r^\mathsf{T}\mathbf w)^{[m-1]}=\mathbf 0
\end{equation}
has a nonzero solution in $\mathbb C^{n-s}$. Since $P_r^\mathsf{T}$ has full rank, we can assume without loss of generality that the first $n-s$ rows of $P_r^\mathsf{T}$ has full rank. This matrix is denoted by $P_0$. Then,
\[
P_r^\mathsf{T}=\begin{bmatrix}P_0\\ P_1\end{bmatrix}.
\]
Let $\mathbf z=P_0\mathbf w\in\mathbb C^{n-s}$. 
It follows that \eqref{nonzero:eq1} is equivalent to
\[
[P_0^\mathsf{T},P_1^\mathsf{T}]\big((\mathbf z^{[m-1]})^\mathsf{T}, ((P_1P_0^\mathsf{-1}\mathbf z)^{[m-1]})^\mathsf{T}\big)^\mathsf{T}=\mathbf 0
\]
and furthermore
\[
[I,P_0^\mathsf{-T}P_1^\mathsf{T}]\big((\mathbf z^{[m-1]})^\mathsf{T}, ((P_1P_0^\mathsf{-1}\mathbf z)^{[m-1]})^\mathsf{T}\big)^\mathsf{T}=\mathbf 0
\]
has a nonzero solution $\mathbf z$ in $\mathbb C^{n-s}$. 
Let $M=P_1P_0^\mathsf{-1}$. Then, we have that it can be further reformulated as
\[
\mathbf z^{[m-1]}+M^\mathsf{T}(M\mathbf z)^{[m-1]}=\mathbf 0.
\]
By \eqref{group-equ}, it is
\[
\big(\mathcal I+M^\mathsf{T}\cdot\mathcal I\big)\mathbf z^{m-1}=\mathbf 0. 
\]
For generic $P$, the tensor $M^\mathsf{T}\cdot\mathcal I$ does not have an eigenvalue $-1$, since it does when $P_0=I\in\mathbb C^{(n-s)\times(n-s)}$ and $P_1=\mathbf 0\in\mathbb C^{s\times(n-s)}$. Therefore, this system does not have a nontrivial solution in general and hence \eqref{nonzero-res} holds. 
\end{proof}

The next proposition is in deterministic form. 
\begin{proposition}[Essentially Orthogonal Decomposable Tensors]\label{prop:lower}
Let $\mathcal A\in \operatorname{\mathcal S}^{m}(\mathbb C^n)$ be such that the matrix $A$ from \eqref{rankone} has full rank, i.e., $R=\operatorname{mrank}(\mathcal A)=s\leq n$. Then, 
\[
\operatorname{gm}(0)=n-s,
\] 
and therefore
\[
\operatorname{am}(0)\geq (n-s)(m-1)^{n-1}\geq \operatorname{ess}(V(0))=\operatorname{gm}(0)(m-1)^{\operatorname{gm}(0)-1}.
\]
\end{proposition}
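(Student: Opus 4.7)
The plan is to reduce Proposition~\ref{prop:lower} to two deterministic inputs: an exact description of $V(0)$ (stronger than the merely generic statement of Proposition~\ref{prop:zero-rank}) together with the algebraic multiplicity bound already built into Theorem~\ref{thm:nonzeroeigenvalue}. The single observation that lets us dispense with genericity is that when $R=\operatorname{mrank}(\mathcal A)=s$, the vectors $\mathbf a_1,\ldots,\mathbf a_s$ in the rank-one decomposition \eqref{rankone} are linearly independent.

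First I would substitute \eqref{rankone} into the zero eigenvalue equation \eqref{eigenvalue}, which becomes $\sum_{i=1}^{s}(\mathbf a_i^\mathsf{T}\mathbf x)^{m-1}\mathbf a_i=\mathbf 0$. Linear independence of the $\mathbf a_i$ forces each scalar coefficient $(\mathbf a_i^\mathsf{T}\mathbf x)^{m-1}$, and hence $\mathbf a_i^\mathsf{T}\mathbf x$, to vanish for every $i$; that is, $\mathbf x\in\ker(A^\mathsf{T})$. The reverse inclusion $\ker(A^\mathsf{T})\subseteq V(0)$ is immediate from \eqref{eigenvalue}. Thus $V(0)=\ker(A^\mathsf{T})$ as an affine variety. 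Being a linear subspace of dimension $n-s$, it is irreducible, so $\operatorname{gm}(0)=n-s$ and $\operatorname{ess}(V(0))=(n-s)(m-1)^{n-s-1}=\operatorname{gm}(0)(m-1)^{\operatorname{gm}(0)-1}$.

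Next I would invoke Theorem~\ref{thm:nonzeroeigenvalue}. Its proof extracts the factor $\lambda^{(n-s)(m-1)^{n-1}}$ from $\operatorname{Det}(\lambda\mathcal I-\mathcal A)$ using only the upper-block structure of $P\cdot\mathcal A$ and basic determinant identities, without any genericity assumption on $\mathcal A$. This yields the deterministic bound $\operatorname{am}(0)\geq (n-s)(m-1)^{n-1}$. Combined with the previous paragraph and the trivial estimate $(m-1)^{n-1}\geq (m-1)^{n-s-1}$ (valid for $m\geq 2$ and $s\geq 0$), one obtains
\[
\operatorname{am}(0)\;\geq\;(n-s)(m-1)^{n-1}\;\geq\;(n-s)(m-1)^{n-s-1}\;=\;\operatorname{gm}(0)(m-1)^{\operatorname{gm}(0)-1},
\]
which is the required chain.

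There is no substantial obstacle: the whole argument is essentially a direct unpacking of definitions once one realizes that the hypothesis $R=s$ is precisely what upgrades the generic identification $V(0)=\ker(A^\mathsf{T})$ to a deterministic one. The only point that warrants a brief check is that the bound on $\operatorname{am}(0)$ extracted from Theorem~\ref{thm:nonzeroeigenvalue} is genuinely unconditional rather than generic, and this is visible from the determinantal factorization step in its proof.
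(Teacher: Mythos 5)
Your proposal is correct and follows essentially the same route as the paper: full rank of $A$ forces $\mathbf d=\mathbf 0$, hence $V(0)=\ker(A^\mathsf{T})$ is a linear subspace of dimension $n-s$, and the bound on $\operatorname{am}(0)$ is inherited from the deterministic factorization step in Theorem~\ref{thm:nonzeroeigenvalue}. Your explicit check that this factorization step is unconditional (the paper leaves it implicit) is a worthwhile addition but not a departure in method.
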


\begin{proof}
Sine the matrix $A$ is of full rank, it follows from \eqref{eigenvalue} that $\mathbf x\in V(0)$ if and only if
\[
\mathbf d=\mathbf 0
\]
if and only if
\[
A^\mathsf{T}\mathbf x=\mathbf 0.
\]
Therefore, $V(0)$ is the kernel of the matrix $A^\mathsf{T}$, which is a linear subspace of dimension $n-s$. 
\end{proof}

\section{Coordinate Linear Subspace as Eigenvectors}\label{sec:linear}
In this section, we generalize the relation between the two multiplicities for matrices to tensors. As we saw from Section~\ref{sec:matrix}, the general relation for matrices follows from the case when the eigenspace is a linear subspace in coordinate form and the fact that both multiplicities are invariants under the orthogonal linear group action. Since the algebraic multiplicity for a tensor is not an invariant any more (cf.\ Proposition~\ref{prop:noninvariant}), we show the coordinate case. To this end, properties on quasi-triangular tensors and
symmetrizations of tensors are necessary.

\subsection{Quasi-Triangular Tensors}\label{sec:quasi}
\begin{definition}[Sub-Tensor]\label{def:sub-tensor}
Let $\mathcal T\in\mathbb{T}(\mathbb{C}^n,m)$ and $1\leq k\leq n$. Tensor $\mathcal U\in\mathbb{T}(\mathbb{C}^k,m)$ is called a \textit{sub-tensor} of $\mathcal T$
associated to the index set $\{j_1,\ldots,j_k\}\subseteq\{1,\ldots,n\}$ if and only if
$u_{i_1\ldots i_m}=t_{j_{i_1}\ldots j_{i_m}}$ for all $i_1,\ldots,i_m\in\{1,\ldots,k\}$.
\end{definition}
\begin{proposition}[Quasi-Triangular Tensor]\label{prop:quasi-tri}
Let tensor $\mathcal T=(t_{ii_2\ldots i_m})\in\mathbb{T}(\mathbb{C}^n,m)$. Suppose that there exists some $k\in\{1,\dots,n\}$ such that $\sum_{(i_2,\dots,i_m)\in\mathbb X(\alpha)}t_{ii_2\dots i_m}=0$ for all $i>k$ and $\alpha\in\{0,\dots,m-1\}^k\times\{0\}^{n-k}$. Let $\mathcal U\in\mathbb T(\mathbb C^k,m)$ be the sub-tensor of $\mathcal T$ associated to $\{1,\dots,k\}$. Then,
\[
\operatorname{DET}(\mathcal T)=\operatorname{DET}(\mathcal U) p(\mathcal T)
\]
for some polynomial $p\in\mathbb C[\mathcal T]$. 
\end{proposition}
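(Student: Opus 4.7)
The plan is to reduce the statement to a containment of vanishing loci on the linear subvariety $W \subset \mathbb T(\mathbb C^n, m)$ cut out by the quasi-triangularity conditions, and then to upgrade that containment to polynomial divisibility via Hilbert's Nullstellensatz combined with the irreducibility of the tensor determinant. The key tool throughout is Proposition~\ref{prop:irreducible}, which identifies the vanishing of $\operatorname{Det}$ with the existence of a nontrivial solution of $\mathcal T\mathbf x^{m-1} = \mathbf 0$, applied both to $\mathcal T$ and to its sub-tensor $\mathcal U$.

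First I would reinterpret the hypothesis. The sum $\sum_{(i_2,\dots,i_m)\in\mathbb X(\alpha)} t_{ii_2\dots i_m}$ is precisely the coefficient of $\mathbf x^\alpha$ in $(\mathcal T\mathbf x^{m-1})_i$; ranging $\alpha$ over multi-indices of total weight $m-1$ supported on $\{1,\dots,k\}$ and $i$ over $\{k+1,\dots,n\}$, the hypothesis becomes the clean statement that for every $i > k$ the polynomial $(\mathcal T\mathbf x^{m-1})_i$ vanishes identically on the coordinate subspace $\{x_{k+1} = \dots = x_n = 0\}$. With this reformulation I would then deduce the set-theoretic containment
\[
\{\mathcal T \in W : \operatorname{Det}(\mathcal U) = 0\}\ \subseteq\ \{\mathcal T \in W : \operatorname{Det}(\mathcal T) = 0\}.
\]
Indeed, given $\mathbf y \in \mathbb C^k \setminus \{\mathbf 0\}$ with $\mathcal U\mathbf y^{m-1} = \mathbf 0$, the lift $\mathbf x = (\mathbf y, \mathbf 0) \in \mathbb C^n$ annihilates the first $k$ coordinates of $\mathcal T\mathbf x^{m-1}$ tautologically from the definition of the sub-tensor $\mathcal U$ and annihilates the last $n-k$ coordinates by the reformulated hypothesis, so $0 \in \sigma(\mathcal T)$ and a second application of Proposition~\ref{prop:irreducible} gives $\operatorname{Det}(\mathcal T) = 0$.

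Finally I would upgrade this containment of loci to divisibility of polynomials. The defining linear equations of $W$ involve only entries $t_{ii_2\dots i_m}$ with $i > k$, whereas every entry of $\mathcal U$ carries first index $i_1 \leq k$; hence the entries of $\mathcal U$ are all unconstrained coordinates on $W$, the coordinate ring $\mathbb C[W]$ is a polynomial ring, and the restriction $\operatorname{DET}(\mathcal U)|_W$ is still irreducible in $\mathbb C[W]$ (by Proposition~\ref{prop:irreducible} applied in dimension $k$, together with the preservation of irreducibility under adjunction of independent variables). Combining the containment of loci with Hilbert's Nullstellensatz yields $\operatorname{DET}(\mathcal T)|_W \in \sqrt{(\operatorname{DET}(\mathcal U)|_W)}$, and since the right-hand side is prime hence radical, we conclude $\operatorname{DET}(\mathcal T) = \operatorname{DET}(\mathcal U)\, p(\mathcal T)$ on $W$ for some polynomial $p$, which lifts to a polynomial in the entries of $\mathcal T$ as required. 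The main technical point will be the careful verification that $\operatorname{DET}(\mathcal U)|_W$ remains irreducible on $W$, which I would pin down by explicitly exhibiting a system of free coordinates on $W$ that contains all entries of $\mathcal U$.
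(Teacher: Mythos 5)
Your proposal is correct and follows essentially the same route as the paper's proof: lift a nontrivial zero of $\mathcal U\mathbf y^{m-1}$ to a nontrivial zero of $\mathcal T\mathbf x^{m-1}$ via Proposition~\ref{prop:irreducible} to obtain the containment of vanishing loci on the linear subspace of quasi-triangular tensors, then apply the Nullstellensatz together with the irreducibility (hence primality) of the restricted $\operatorname{DET}(\mathcal U)$ to upgrade this to divisibility. You even make explicit the one step the paper dismisses as ``not hard to see,'' namely that the entries of $\mathcal U$ remain free coordinates on that subspace because the defining linear relations only involve entries with first index exceeding $k$, so $\operatorname{DET}(\mathcal U)$ stays irreducible after restriction.
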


\begin{proof}
Let $\mathcal T=(t_{ii_2\ldots i_m})\in\mathbb{T}(\mathbb{C}^n,m)$ be a tensor such that the hypothesis being satisfied and $\operatorname{Det}(\mathcal U)=0$ for the sub-tensor $\mathcal U$. Then, by Proposition~\ref{prop:irreducible}, we see that there is a vector $\mathbf y\in\mathbb C^k\setminus\{\mathbf 0\}$ such that
\[
\mathcal U\mathbf y^{m-1}=\mathbf 0.
\]
Define a vector $\mathbf x\in\mathbb C^n$ as $\mathbf x=(\mathbf y^\mathsf{T},\mathbf 0^\mathsf{T})^\mathsf{T}$. Then $\mathbf x\neq \mathbf 0$, and by the assumption on $\mathcal T$ 
\[
\mathcal T\mathbf x^{m-1}=\big((\mathcal U\mathbf y^{m-1})^\mathsf{T},\mathbf 0^\mathsf{T}\big)^\mathsf{T}=\mathbf 0.
\]
Therefore, by Proposition~\ref{prop:irreducible} again, $\operatorname{Det}(\mathcal T)=0$. 

The set of  tensors satisfying the hypothesis is a linear subspace $\mathbb L$ of the total space $\mathbb{T}(\mathbb{C}^n,m)$. Let $\mathbb{M}$ be the set of sub-tensors of elements of $\mathbb{L}$ associated to $\{1,\dots,k\}$. There is a projection 
\[
\delta:\mathbb{T}(\mathbb{C}^n,m)\to \mathbb{T}(\mathbb{C}^k,m)
\] 
sending $\mathcal{T}\in \mathbb{T}(\mathbb{C}^n,m)$ to its sub-tensor $\mathcal{U}\in \mathbb{T}(\mathbb{C}^k,m)$ associated to $\{1,\dots,k\}$. In particular, $\delta$ maps $\mathbb{L}$ onto $\mathbb{M}$. Let $\delta^*:\mathbb{C}[\mathcal{U}]\to \mathbb{C}[\mathcal{T}]$ be the induced homomorphism. It is easy to see that $\mathbb{M}$ is in fact simply $\mathbb{T}(\mathbb{C}^k,m)$. 

Hence we have 
\[
\mathbb{L}\cap \mathbb{V} (\delta^*\operatorname{DET}(\mathcal{U})) \subseteq \mathbb{V}(\operatorname{DET}(\mathcal{T})).
\]
This implies that on the linear space $\mathbb{L}$ we have 
\[
\mathbb{V} (\delta^*\operatorname{DET}(\mathcal{U})) \subseteq \mathbb{V}(\operatorname{DET}(\mathcal{T})).
\]
It is not hard to see that $\delta^*\operatorname{DET}(\mathcal{U})$ is an irreducible polynomial even on $\mathbb{L}$, since $\mathbb{L}$ is the linear subspace consisting of quasi-triangular tensors. Therefore by Nullstellensatz \cite{clo98} we conclude that
\[
\operatorname{DET}(\mathcal T)=\operatorname{DET}(\mathcal U) p(\mathcal T)
\]
for some polynomial $p\in\mathbb C[\mathcal{T}]$.
\end{proof}

Tensors in $\mathbb T(\mathbb C^n,m)$ satisfying the hypothesis in Proposition~\ref{prop:quasi-tri} are \textit{quasi-triangular tensors}. It is a class of structured tensors broader than the class of upper triangular tensors introduced in \cite{hhlq13}. We note that both notions reduce to upper triangularity in the matrix counterpart.  

\subsection{Symmetrization}\label{sec:symmetrization}
The topic of this article is eigenvalues of tensors, which are closely and solely related to $\mathcal T\mathbf x^{m-1}$. It is easy to see that the $i$-th slice  $\mathcal T_i:=(t_{ii_2\dots i_m})_{1\leq i_2,\dots,i_m\leq n}$ of $\mathcal T$ is an $(m-1)$-th order $n$-dimensional tensor, and for all $i=1,\dots,n$
\[
\big(\mathcal T\mathbf x^{m-1}\big)_i=\langle\operatorname{Sym}(\mathcal T_i),\mathbf x^{\otimes (m-1)}\rangle:=\sum_{i_2,\dots,i_m=1}^n\big(\operatorname{Sym}(\mathcal T_i)\big)_{i_2\dots i_m}x_{i_2}\dots x_{i_m}\ \text{for all }\mathbf x\in\mathbb C^n,
\]
where $\operatorname{Sym}(\mathcal T_i)$ is the \textit{symmetrization} of the tensor $\mathcal T_i$ as a symmetric tensor in the sense of the above equalities. Therefore, for every tensor $\mathcal T\in\mathbb T(\mathbb C^n, m)$, we associate it an element $\operatorname{eSym}(\mathcal T)$ in $\mathbb{TS}(\mathbb{C}^n,m):=\mathbb C^n\otimes\operatorname{\mathcal S}^{m-1}(\mathbb C^n)$ by symmetrizing its slices. Hence,
\[
\mathcal T\mathbf x^{m-1}=\operatorname{eSym}(\mathcal T)\mathbf x^{m-1}\ \text{for all }\mathbf x\in\mathbb C^n.
\]
We see that all tensors in the fibre of the above map have the same defining equations for the eigenvalue problem.
\begin{proposition}[Symmetrization]\label{prop:symmetrization}
Let $\mathcal T\in\mathbb{T}(\mathbb{C}^n,m)$. Then, 
\[
\operatorname{Det}(\mathcal T-\lambda\mathcal I)=\operatorname{Det}(\operatorname{eSym}(\mathcal T)-\lambda\mathcal I).
\]
\end{proposition}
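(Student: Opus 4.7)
The plan is to reduce the identity to the elementary fact that, by construction of the slice-wise symmetrization $\operatorname{eSym}$, the vector-valued polynomial maps $\mathbf x \mapsto \mathcal T \mathbf x^{m-1}$ and $\mathbf x \mapsto \operatorname{eSym}(\mathcal T) \mathbf x^{m-1}$ are \emph{equal as polynomials in} $\mathbf x$. Subtracting $\lambda \mathbf x^{[m-1]}$ from both, the polynomial systems defining the characteristic equations of $\mathcal T$ and of $\operatorname{eSym}(\mathcal T)$ therefore coincide componentwise. Since the classical resultant (Definition~\ref{def:resultant}) depends only on the coefficients of its input polynomials in the monomial basis, it takes equal values on the two systems, and the claim will follow by tracing this through Definition~\ref{def:determinant}.

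Concretely, I would carry out three bookkeeping steps. First, record that the $i$-th component of $\mathcal T \mathbf x^{m-1}$ has the monomial $\mathbf x^\alpha$ appearing with coefficient $\sum_{(i_2,\ldots,i_m)\in\mathbb X(\alpha)} t_{ii_2\ldots i_m}$, and that this same aggregation over $\mathbb X(\alpha)$ is precisely the substitution used to pass from $\operatorname{RES}$ to $\operatorname{DET}$ in Definition~\ref{def:determinant}. Second, observe that $\operatorname{Sym}(\mathcal T_i)$ is defined so that its pairing with $\mathbf x^{\otimes(m-1)}$ yields the same polynomial as $\mathcal T_i$, and hence the orbit sums $\sum_{(i_2,\ldots,i_m)\in\mathbb X(\alpha)} t_{ii_2\ldots i_m}$ are preserved under $\operatorname{eSym}$. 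Third, since $\operatorname{eSym}$ is linear and $\mathcal I$ is already symmetric, $\operatorname{eSym}(\mathcal T - \lambda \mathcal I) = \operatorname{eSym}(\mathcal T) - \lambda \mathcal I$. Putting these together, $\operatorname{Det}(\mathcal T - \lambda \mathcal I)$ and $\operatorname{Det}(\operatorname{eSym}(\mathcal T) - \lambda \mathcal I)$ are both equal to the evaluation of the single polynomial $\operatorname{RES}$ at the same point in coefficient space, hence they agree.

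I do not expect a real obstacle in this proof; the only mildly delicate point is to match the bookkeeping between the tensor indices $(i_2,\ldots,i_m)$ and the exponent multi-index $\alpha$, which is handled precisely by the set $\mathbb X(\alpha)$ already fixed in Definition~\ref{def:determinant}. The entire argument rests on the principle that the eigenvalue problem sees only the polynomial map $\mathcal T \mathbf x^{m-1}$, not the individual tensor entries, and that both the resultant and the tensor determinant are defined purely in terms of that polynomial datum.
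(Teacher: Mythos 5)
Your proposal is correct and is essentially the paper's own argument: the determinant depends on a tensor only through the orbit sums $\sum_{(i_2,\ldots,i_m)\in\mathbb X(\alpha)} t_{ii_2\ldots i_m}$ appearing in Definition~\ref{def:determinant}, which are unchanged by slice-wise symmetrization, and the conclusion follows by applying this to $\mathcal T-\lambda\mathcal I$ together with $\operatorname{eSym}(\mathcal I)=\mathcal I$. Your version simply spells out the coefficient bookkeeping that the paper leaves implicit.
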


\begin{proof}
Suppose that $\mathcal T\in\mathbb{T}(\mathbb{C}^n,m)$ with symbolic entries, then we have that 
\[
\operatorname{Det}(\mathcal T)=\operatorname{DET}.
\]
By Definition~\ref{def:determinant}, we have
\[
\operatorname{Det}(\mathcal T)=\operatorname{Det}(\operatorname{eSym}(\mathcal T)).
\]
Henceforth, the result follows from the fact that $\operatorname{eSym}(\mathcal I)=\mathcal I$. 
\end{proof}

\begin{proposition}[Correspondence]\label{prop:correspondence}
The tensor space $\mathbb{TS}(\mathbb{C}^n,m)$ and the set of systems of $n$ homogeneous polynomials in $n$ variables of degree $m-1$ has a one to one correspondence.
\end{proposition}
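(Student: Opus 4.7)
The plan is to make the claimed bijection explicit and verify it is well-defined and invertible, using only the classical polarization correspondence between symmetric tensors of order $d$ on $\mathbb{C}^n$ and homogeneous polynomials of degree $d$ in $n$ variables.

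First I would set up notation. For a tensor $\mathcal{T}\in\mathbb{TS}(\mathbb{C}^n,m)=\mathbb{C}^n\otimes\operatorname{\mathcal S}^{m-1}(\mathbb{C}^n)$, regard it as an $n$-tuple of symmetric slices $(\mathcal{T}_1,\dots,\mathcal{T}_n)$ where each $\mathcal{T}_i\in\operatorname{\mathcal S}^{m-1}(\mathbb{C}^n)$ carries the indices $(i_2,\dots,i_m)$. Define the map
\[
\Phi:\mathbb{TS}(\mathbb{C}^n,m)\longrightarrow \bigl(\mathbb{C}[x_1,\dots,x_n]_{m-1}\bigr)^{n},\qquad \Phi(\mathcal{T})=\bigl(f_1,\dots,f_n\bigr),
\]
where $f_i(\mathbf{x})=\bigl(\mathcal{T}\mathbf{x}^{m-1}\bigr)_i=\sum_{i_2,\dots,i_m}(\mathcal{T}_i)_{i_2\dots i_m}x_{i_2}\cdots x_{i_m}$, i.e.\ $f_i$ is the polynomial attached to the symmetric slice $\mathcal{T}_i$.

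Next I would verify that $\Phi$ is bijective. For surjectivity, given an arbitrary system $(f_1,\dots,f_n)$ with each $f_i=\sum_{|\alpha|=m-1}c_{i,\alpha}\mathbf{x}^\alpha$, I define the slice $\mathcal{T}_i$ by setting, for each multiindex $\alpha$ with $|\alpha|=m-1$ and each ordered tuple $(i_2,\dots,i_m)\in\mathbb{X}(\alpha)$,
\[
(\mathcal{T}_i)_{i_2\dots i_m}=\frac{c_{i,\alpha}}{|\mathbb{X}(\alpha)|}.
\]
This produces a symmetric tensor in $\operatorname{\mathcal S}^{m-1}(\mathbb{C}^n)$ whose associated polynomial is $f_i$, so $\Phi$ hits $(f_1,\dots,f_n)$. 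For injectivity, I note that if $\Phi(\mathcal{T})=0$ then each $f_i\equiv 0$; since the monomials $\mathbf{x}^\alpha$ with $|\alpha|=m-1$ are linearly independent in $\mathbb{C}[\mathbf{x}]_{m-1}$, all coefficients $c_{i,\alpha}$ vanish, and because $\mathcal{T}_i$ is symmetric it is determined by these coefficients, forcing $\mathcal{T}=0$.

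Alternatively, one can avoid the case-check by a dimension count: the standard symmetrization/polarization isomorphism gives $\operatorname{\mathcal S}^{m-1}(\mathbb{C}^n)\cong \mathbb{C}[\mathbf{x}]_{m-1}$, both of dimension $\binom{n+m-2}{m-1}$, so $\dim \mathbb{TS}(\mathbb{C}^n,m)=n\binom{n+m-2}{m-1}=\dim\bigl(\mathbb{C}[\mathbf{x}]_{m-1}\bigr)^n$, and $\Phi$ is a surjective linear map between finite-dimensional spaces of equal dimension. There is no real obstacle here; the only point to be a little careful about is the combinatorial factor $|\mathbb{X}(\alpha)|$ in the inverse construction, which is exactly what is needed so that summing over all ordered tuples $(i_2,\dots,i_m)$ with $x_{i_2}\cdots x_{i_m}=\mathbf{x}^\alpha$ reproduces the single coefficient $c_{i,\alpha}$.
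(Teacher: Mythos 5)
Your proof is correct and takes the same route as the paper: the paper's proof consists of the single remark that the correspondence is given by $\mathcal T\mapsto \mathcal T\mathbf x^{m-1}$, which is exactly your map $\Phi$. Your additional verification of bijectivity (the $|\mathbb X(\alpha)|$ normalization for the inverse, or equivalently the dimension count via $\operatorname{\mathcal S}^{m-1}(\mathbb C^n)\cong\mathbb C[\mathbf x]_{m-1}$) is the standard polarization argument the paper leaves implicit.
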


\begin{proof}
The correspondence is indicated by $\mathcal T\mathbf x^{m-1}$ for $\mathcal T\in\mathbb{TS}(\mathbb{C}^n,m)$.
\end{proof}

\subsection{The Coordinate Case}\label{sec:coord}
The eigenvariety can determine the tensor in certain case. 
\begin{proposition}[Unique]\label{prop:unique}
Let tensor $\mathcal T=(t_{ii_2\ldots i_m})\in\mathbb{TS}(\mathbb{C}^n,m)$. If 
$V(\lambda)=\mathbb C^n$ for some $\lambda$, then 
\[
\mathcal T=\lambda\mathcal I.
\]
\end{proposition}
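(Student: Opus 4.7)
The plan is straightforward: exploit the hypothesis that \emph{every} vector in $\mathbb{C}^n$ is an eigenvector for $\lambda$, turn this into a polynomial identity, and then invoke the uniqueness provided by Proposition~\ref{prop:correspondence}.

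First, I would observe that $V(\lambda) = \mathbb{C}^n$ means the eigenvalue equation
\[
\mathcal T\mathbf{x}^{m-1} = \lambda \mathbf{x}^{[m-1]}
\]
holds for \emph{all} $\mathbf{x}\in\mathbb{C}^n$, not just for some eigenvector. Looking at this slice by slice, for each $i\in\{1,\dots,n\}$ one has the identity of homogeneous polynomials of degree $m-1$ in $\mathbf{x}$,
\[
\bigl(\mathcal T\mathbf{x}^{m-1}\bigr)_i \;=\; \lambda\, x_i^{m-1},
\]
valid for every $\mathbf{x}\in\mathbb{C}^n$. The identity tensor $\mathcal{I}\in\mathbb{TS}(\mathbb{C}^n,m)$ clearly produces the right-hand side: $(\lambda\mathcal{I})\mathbf{x}^{m-1} = \lambda\mathbf{x}^{[m-1]}$ identically.

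Second, I would invoke Proposition~\ref{prop:correspondence}: the map sending $\mathcal S\in\mathbb{TS}(\mathbb{C}^n,m)$ to the $n$-tuple of homogeneous polynomials $\mathcal S\mathbf{x}^{m-1}$ is a bijection onto systems of $n$ degree-$(m-1)$ forms in $n$ variables. Since $\mathcal T$ and $\lambda\mathcal{I}$ both lie in $\mathbb{TS}(\mathbb{C}^n,m)$ and induce the identical system of polynomials (namely $\lambda x_i^{m-1}$ in the $i$-th slot), the bijection forces
\[
\mathcal T \;=\; \lambda\mathcal{I}.
\]

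There is not really a hard step here; the subtlety lies only in recognizing why the assumption $\mathcal T\in\mathbb{TS}(\mathbb{C}^n,m)$ is genuinely needed. For a general (non-slicewise-symmetric) $\mathcal T\in\mathbb{T}(\mathbb{C}^n,m)$, passing from the polynomial identity to a tensor identity only pins down the symmetrization $\operatorname{eSym}(\mathcal T)$ (as is precisely the content of Proposition~\ref{prop:symmetrization}), and many distinct tensors share the same $\operatorname{eSym}$. The hypothesis $\mathcal T\in\mathbb{TS}(\mathbb{C}^n,m)$ is exactly what removes this ambiguity, so the statement cannot be sharpened to arbitrary tensors without passing to the symmetrization first.
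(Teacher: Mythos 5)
Your proof is correct and follows essentially the same route as the paper's: both convert $V(\lambda)=\mathbb{C}^n$ into the polynomial identity $(\mathcal T-\lambda\mathcal I)\mathbf{x}^{m-1}\equiv\mathbf 0$ and then conclude via the injectivity of the correspondence in Proposition~\ref{prop:correspondence} (the paper additionally invokes the Nullstellensatz for the fact that a polynomial vanishing on all of $\mathbb{C}^n$ has zero coefficients, which you use implicitly when identifying the two polynomial systems). Your closing remark on why the hypothesis $\mathcal T\in\mathbb{TS}(\mathbb{C}^n,m)$ is genuinely needed is a helpful clarification the paper leaves implicit.
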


\begin{proof}
Note that $V(\lambda)=\mathbb C^n$ is equivalent to say that the polynomial system
\[
(\mathcal T-\lambda \mathcal I)\mathbf x^{m-1}\equiv\mathbf 0.
\]
Therefore, every polynomial $\big((\mathcal T-\lambda \mathcal I)\mathbf x^{m-1}\big)_i\equiv 0$. By Proposition~\ref{prop:correspondence} and Hilbert's Nullstellensatz \cite{clo98}, we have that all the coefficients of this polynomial are zero.
Henceforth, we must have $\mathcal T=\lambda \mathcal I$.
\end{proof}

The next theorem is the main theorem in this section. 
\begin{theorem}[Coordinate Case]\label{thm:multiplicity}
Let tensor $\mathcal T\in\mathbb T(\mathbb{C}^n,m)$. If for some $\lambda\in\sigma(\mathcal T)$, $V(\lambda)\supseteq P\{\mathbf x\in\mathbb C^n : x_{\operatorname{gm}(\lambda)+1}=\dots=x_{n}=0\}$ for some permutation matrix $P\in\mathbb{O}(\mathbb C,n)$, then
we have that
\[
\operatorname{am}(\lambda)\geq \operatorname{gm}(\lambda)(m-1)^{\operatorname{gm}(\lambda)-1}.
\]
\end{theorem}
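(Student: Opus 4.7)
The plan is to reduce the tensor, via symmetrization, to one whose structure makes it quasi-triangular in the sense of Proposition~\ref{prop:quasi-tri}, and then read off the required factor of the characteristic polynomial. Set $k:=\operatorname{gm}(\lambda)$ and $W:=\{\mathbf x\in\mathbb C^n:x_{k+1}=\cdots=x_n=0\}$. I would first dispose of the permutation $P$ by noting that for any permutation matrix $P$ one has $P\cdot\mathcal I=\mathcal I$ and $(P\mathbf x)^{[m-1]}=P\mathbf x^{[m-1]}$ coordinate-wise; together with \eqref{group-equ} this shows that $\mathcal T$ and $P^{-1}\!\cdot\mathcal T$ have identical characteristic polynomials and that eigenvectors merely get permuted. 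Replacing $\mathcal T$ by $P^{-1}\!\cdot\mathcal T$, I may assume $V(\lambda)\supseteq W$.

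Next, by Proposition~\ref{prop:symmetrization} the characteristic polynomial is unchanged upon replacing $\mathcal T$ by $\mathcal S:=\operatorname{eSym}(\mathcal T)\in\mathbb{TS}(\mathbb C^n,m)$; since $\mathcal S\mathbf x^{m-1}=\mathcal T\mathbf x^{m-1}$ for all $\mathbf x$, the inclusion $W\subseteq V_{\mathcal S}(\lambda)$ still holds. For $\mathbf x\in W$ evaluate the identity $(\mathcal S-\lambda\mathcal I)\mathbf x^{m-1}=\mathbf 0$ slice by slice. Let $\mathcal U\in\mathbb{TS}(\mathbb C^k,m)$ be the subtensor of $\mathcal S$ indexed by $\{1,\dots,k\}$. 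The slices $i\le k$ give $\mathcal U\mathbf y^{m-1}=\lambda\mathbf y^{[m-1]}$ for every $\mathbf y\in\mathbb C^k$, so $V_{\mathcal U}(\lambda)=\mathbb C^k$ and Proposition~\ref{prop:unique} yields $\mathcal U=\lambda\mathcal I_k$. The slices $i>k$ give the polynomial identity
\[
\sum_{i_2,\dots,i_m\in\{1,\dots,k\}}s_{ii_2\dots i_m}\,x_{i_2}\cdots x_{i_m}\equiv 0\quad\text{in }\mathbb C[x_1,\dots,x_k],
\]
so for every $\alpha\in\{0,\dots,m-1\}^k\times\{0\}^{n-k}$ with $|\alpha|=m-1$ the coefficient $\sum_{(i_2,\dots,i_m)\in\mathbb X(\alpha)}s_{ii_2\dots i_m}$ vanishes.

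Now consider $\mathcal R:=\mu\mathcal I-\mathcal S$ with $\mu$ a formal parameter. Because the only nonzero entries of $\mu\mathcal I$ have all indices equal, they contribute nothing to sums indexed by $i>k$ and $(i_2,\dots,i_m)\in\{1,\dots,k\}^{m-1}$; thus the hypothesis of Proposition~\ref{prop:quasi-tri} is satisfied by $\mathcal R$ with this $k$. The subtensor of $\mathcal R$ on $\{1,\dots,k\}$ is $\mu\mathcal I_k-\mathcal U=(\mu-\lambda)\mathcal I_k$, whose determinant, by Proposition~\ref{prop:identity} in dimension $k$ (evaluated at the appropriate substitution), equals $(\mu-\lambda)^{k(m-1)^{k-1}}$. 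Hence Proposition~\ref{prop:quasi-tri} gives
\[
\chi(\mu)=\operatorname{Det}(\mu\mathcal I-\mathcal T)=\operatorname{Det}(\mathcal R)=(\mu-\lambda)^{k(m-1)^{k-1}}\,p(\mathcal R),
\]
showing that $\operatorname{am}(\lambda)\ge k(m-1)^{k-1}=\operatorname{gm}(\lambda)(m-1)^{\operatorname{gm}(\lambda)-1}$.

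The most delicate step is the structural analysis after symmetrization: one must pass carefully from the vanishing of the polynomial $(\mathcal S-\lambda\mathcal I)\mathbf x^{m-1}$ on $W$ to the exact coefficient vanishing demanded by Proposition~\ref{prop:quasi-tri}, and simultaneously identify the diagonal subtensor $\mathcal U$ with $\lambda\mathcal I_k$. The symmetry built into $\mathbb{TS}(\mathbb C^n,m)$ is what makes the coefficient identification well-defined, which is why the symmetrization reduction is essential; without it, the coefficient sums would be ambiguous and neither Proposition~\ref{prop:unique} nor Proposition~\ref{prop:quasi-tri} would apply cleanly.
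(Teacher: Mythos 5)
Your proposal is correct and follows essentially the same route as the paper's proof: eliminate the permutation via $P\cdot\mathcal I=\mathcal I$, symmetrize the slices so that Proposition~\ref{prop:unique} identifies the leading $k\times\cdots\times k$ block with $\lambda\mathcal I_k$ and the remaining slices yield the coefficient vanishing needed for quasi-triangularity, then factor the characteristic polynomial with Propositions~\ref{prop:identity} and~\ref{prop:quasi-tri}. The only cosmetic difference is that you apply the quasi-triangular factorization to the symmetrized tensor $\mu\mathcal I-\mathcal S$ rather than to $\mu\mathcal I-\mathcal T$ itself, which is equivalent since the coefficient sums over $\mathbb X(\alpha)$ and the determinant are both unchanged by slice-wise symmetrization.
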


\begin{proof}
Let $P\in\mathbb{O}(n,\mathbb C)$ be a permutation matrix. It is easy to see that
\[
P\cdot \mathcal I=\mathcal I.
\]
Therefore,
\[
\operatorname{Det}(\lambda \mathcal I-P\cdot \mathcal A)=
\operatorname{Det}(P\cdot(\lambda \mathcal I-\mathcal A))=
\operatorname{Det}(\lambda \mathcal I-\mathcal A). 
\]
Henceforth, we can assume that $P=I$, the identity matrix, without loss of any generality. 

Let $\mathcal B=\operatorname{eSym}(\mathcal T)$ be the tensor in $\mathbb{TS}(\mathbb C^n,m)$ corresponding to $\mathcal T$. Since $\mathcal B$ and $\mathcal T$ have the same defining equations for the eigenvalue problem, they have the same eigenvalues and eigenvectors. Let $\mathcal U\in\mathbb T(\mathbb C^{ \operatorname{gm}(\lambda)},m)$ and $\mathcal C\in\mathbb {TS}(\mathbb C^{ \operatorname{gm}(\lambda)},m)$ be the sub-tensors of $\mathcal T$ and $\mathcal B$ associated to $\{1,\dots, \operatorname{gm}(\lambda)\}$ respectively. It is easy to see that $\mathcal C=\operatorname{eSym}(\mathcal U)$. 

By the hypothesis, 
\[
\mathcal B\mathbf x^{m-1}=\lambda\mathbf x^{[m-1]}\ \text{for all }\mathbf x\in \{\mathbf x\in\mathbb C^n : x_{\operatorname{gm}(\lambda)+1}=\dots=x_{n}=0\}.
\]
By a direct caculation, we see that
\[
\mathcal C\mathbf y^{m-1}=\lambda\mathbf y^{[m-1]}\ \text{for all }\mathbf y\in\mathbb C^{\operatorname{gm}(\lambda)}. 
\]
Hence, $\mathcal C$ has an eigenvalue $\lambda$ with the eigenvectors being the whole $\mathbb C^{\operatorname{gm}(\lambda)}\setminus\{\mathbf 0\}$. Therefore, it follows from Proposition~\ref{prop:unique} that 
\[
\mathcal C=\lambda\mathcal I. 
\]
 It follows from Propositions~\ref{prop:identity} and \ref{prop:symmetrization} that
\[
\operatorname{Det}(\mathcal U-\mu\mathcal I)=\operatorname{Det}(\mathcal C-\mu\mathcal I)=(\lambda-\mu)^{\operatorname{gm}(\lambda)(m-1)^{\operatorname{gm}(\lambda)-1}}.
\]

Likewise, by the hypothesis, we have that for all $i>\operatorname{gm}(\lambda)$,
\[
(\mathcal B\mathbf x^{m-1})_i=\sum_{i_2,\dots,i_m}b_{ii_2\dots i_m}x_{i_2}\dots x_{i_m}=\lambda x_i^{m-1}=0\ \text{for all }\mathbf x\in\{\mathbf x\in\mathbb C^n : x_{\operatorname{gm}(\lambda)+1}=\dots=x_n=0\}.
\]
Then, it follows from Proposition~\ref{prop:correspondence} that
\[
b_{ii_2\dots i_m}=0\ \text{for all }i>\operatorname{gm}(\lambda), \ i_2,\dots,i_m\in\{1,\dots,\operatorname{gm}(\lambda)\}.
\]
This is the same the say that $\sum_{(i_2,\dots,i_m)\in\mathbb X(\alpha)}t_{ii_2\dots i_m}=0$ for all $i>\operatorname{gm}(\lambda)$ and $\alpha\in\{0,\dots,m-1\}^{\operatorname{gm}(\lambda)}\times\{0\}^{n-\operatorname{gm}(\lambda)}$. In other words, both $\mathcal B$ and $\mathcal T$ are quasi-triangular and the associated index set is $\{1,\dots,\operatorname{gm}(\lambda)\}$.

Therefore,
it is easy to see that $\mathcal T-\mu\mathcal I$ satisfies the hypothesis in Proposition~\ref{prop:quasi-tri} with $k=\operatorname{gm}(\lambda)$. 
Henceforth, 
\[
\operatorname{Det}(\mathcal T-\mu\mathcal I)=\operatorname{Det}(\mathcal U-\mu\mathcal I)p(\mathcal T-\mu\mathcal I)=(\lambda-\mu)^{\operatorname{gm}(\lambda)(m-1)^{\operatorname{gm}(\lambda)-1}}p(\mathcal T-\mu\mathcal I)
\]
for some polynomial $p$.
So, $\lambda$ is an eigenvalue of $\mathcal T$ with algebraic multiplicity at least 
$\operatorname{gm}(\lambda)(m-1)^{\operatorname{gm}(\lambda)-1}$. The conclusion then follows. 
\end{proof}

\section{Generic Tensors}\label{sec:generic}
In this section, we consider generic tensors. 
\begin{lemma}[Generic Tensor Eigenvalue]\label{lem:generic}
Let tensor $\mathcal T\in\mathbb{T}(\mathbb{C}^n,m)$ be generic. Then, 
\[
\operatorname{am}(\lambda)=1
\]
for all $\lambda\in\sigma(\mathcal T)$. 
\end{lemma}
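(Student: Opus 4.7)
My plan is to reduce the lemma to the non-vanishing of a single polynomial invariant of $\mathcal T$. By Definition~\ref{def:determinant} and Proposition~\ref{prop:irreducible}, the characteristic polynomial
\[
\chi_{\mathcal T}(\lambda) \;=\; \operatorname{Det}(\lambda\mathcal I - \mathcal T)
\]
is monic in $\lambda$ of degree $N := n(m-1)^{n-1}$, and each of its coefficients is a polynomial in the entries of $\mathcal T$. The condition ``$\operatorname{am}(\lambda)=1$ for every $\lambda\in\sigma(\mathcal T)$'' is exactly the condition that $\chi_{\mathcal T}$ is squarefree, i.e.\ that the $\lambda$-discriminant
\[
\Delta(\mathcal T) \;:=\; \operatorname{Disc}_\lambda\bigl(\chi_{\mathcal T}(\lambda)\bigr)
\]
is nonzero. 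Since $\Delta$ is itself a polynomial in the entries of $\mathcal T$, its zero locus is Zariski-closed in $\mathbb T(\mathbb C^n,m)$; once this locus is shown to be a \emph{proper} subvariety, its complement is precisely the desired Zariski-open dense ``generic'' set.

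Thus the whole argument reduces to exhibiting a single tensor $\mathcal T_0\in \mathbb T(\mathbb C^n,m)$ with $\Delta(\mathcal T_0)\ne 0$, equivalently with $N$ pairwise distinct eigenvalues. Diagonal tensors cannot serve: the formula of \cite{hhlq13} gives, for a diagonal tensor with entries $d_1,\dots,d_n$, the factorization $\chi_{\mathcal T}(\lambda)=\prod_{i=1}^n(\lambda-d_i)^{(m-1)^{n-1}}$, which is squarefree only when $m=2$. A tensor with genuine off-diagonal coupling is required to split these $(m-1)^{n-1}$-fold clusters.

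The main obstacle is precisely the construction of such a $\mathcal T_0$. One natural route is perturbative: take $\mathcal T_\varepsilon = \mathcal I + \varepsilon\mathcal B$, expand $\chi_{\mathcal T_\varepsilon}(\lambda)$ in $\varepsilon$ about the degenerate root $\lambda=1$, and show that the leading $\varepsilon$-correction to the factorization is governed by a polynomial in the entries of $\mathcal B$ which is not identically zero; then any $\mathcal B$ outside that auxiliary hypersurface yields, for all sufficiently small nonzero $\varepsilon$, a $\mathcal T_\varepsilon$ with $\Delta(\mathcal T_\varepsilon)\ne 0$. An alternative, closer in spirit to the Shape Lemma argument deployed later in Section~\ref{sec:generic}, is to observe that for a suitably chosen $\mathcal T$ the eigenvalue system $(\mathcal T-\lambda\mathcal I)\mathbf x^{m-1}=\mathbf 0$ projects, after a generic linear change of coordinates on $\mathbf x$, onto a single univariate polynomial of degree $N$ in $\lambda$ whose roots can be prescribed and therefore chosen distinct. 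Either route leaves the lemma resting on an explicit non-vanishing check, and all the real content lies in that check; the irreducibility of $\operatorname{DET}$ asserted in Proposition~\ref{prop:irreducible} enters only in the background, to rule out any spurious factorization of $\chi_{\mathcal T}$ that would weaken the discriminant as a witness.
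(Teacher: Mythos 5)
Your reduction is the right one: the lemma is equivalent to the statement that the $\lambda$-discriminant $\Delta(\mathcal T)=\operatorname{Disc}_\lambda\bigl(\operatorname{Det}(\lambda\mathcal I-\mathcal T)\bigr)$, a polynomial in the entries of $\mathcal T$, is not identically zero, and your observation that diagonal tensors fail as witnesses for $m\geq 3$ (their characteristic polynomial is $\prod_i(\lambda-d_i)^{(m-1)^{n-1}}$) correctly identifies why an explicit witness is not trivial to produce. But you stop exactly at the point where the proof has to happen: you write that ``all the real content lies in that check'' and then offer two unexecuted sketches (a perturbation of $\mathcal I$ whose leading correction you do not compute, and a Shape-Lemma-style projection for ``a suitably chosen $\mathcal T$'' that you do not choose). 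As it stands the non-vanishing of $\Delta$ is asserted, not proved, so the argument is incomplete.

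The paper closes this gap without any explicit witness, by using an irreducibility fact you relegate to ``the background'': by \cite{hhlq13}, the characteristic polynomial $\operatorname{Det}(\lambda\mathcal I-\mathcal T)$ is irreducible when $\lambda$ \emph{and} the entries of $\mathcal T$ are all treated as indeterminates (note this is a statement about the full symbolic characteristic polynomial, not merely about $\operatorname{DET}$ as in Proposition~\ref{prop:irreducible}). By Gauss's lemma it is then irreducible as a monic polynomial in $\lambda$ over the function field $\mathbb C(\{t_{i_1\dots i_m}\})$, and an irreducible polynomial over a field of characteristic zero is separable; hence its discriminant is a nonzero element of that field, i.e.\ $\Delta$ is a nonzero polynomial in the entries of $\mathcal T$. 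This is precisely the non-vanishing check you left open, obtained in one line. If you prefer your constructive route, you must actually carry out the $\varepsilon$-expansion (or exhibit a concrete $\mathcal T_0$ with $n(m-1)^{n-1}$ distinct eigenvalues); otherwise, import the irreducibility of the symbolic characteristic polynomial as the decisive step rather than as background.
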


\begin{proof}
The polynomial $\operatorname{Det}(\lambda \mathcal I-\mathcal T)$ is monic and irreducible when we regard $\mathcal T$ symbolically \cite{hhlq13}. Therefore, for a generic tensor $\mathcal T$, each root of the characteristic polynomial is algebraically simple. 
It then follows from Proposition~\ref{prop:irreducible} that for generic tensor $\mathcal T$, every eigenvalue has algebraic multiplicity one. 
\end{proof}

\begin{lemma}[Generic Tensor Eigenvector]\label{lem:generic-eigenvector}
Let tensor $\mathcal T\in\mathbb{T}(\mathbb{C}^n,m)$ be generic. Then, 
\[
\operatorname{gm}(\lambda)=1
\]
for all $\lambda\in\sigma(\mathcal T)$. 
\end{lemma}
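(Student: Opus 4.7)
My strategy is to combine the preceding Lemma 5.1 with the Shape Lemma, as hinted at in the introduction. First, I observe that the eigenvariety $V(\lambda)$ is always a cone in $\mathbb{C}^n$, closed under the scaling $\mathbf{x} \mapsto t\mathbf{x}$, since the equation $\mathcal{T}\mathbf{x}^{m-1} = \lambda\mathbf{x}^{[m-1]}$ is homogeneous of degree $m-1$ on both sides. Because $\lambda \in \sigma(\mathcal{T})$ forces $V(\lambda)$ to contain some nonzero vector, we always have $\dim V(\lambda) \geq 1$; the content of the lemma is therefore the matching upper bound $\dim V(\lambda) \leq 1$.

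To produce this upper bound I would consider the affine variety of eigenpairs
\[
W = \{(\mathbf{x}, \lambda) \in \mathbb{C}^n \times \mathbb{C} : \mathcal{T}\mathbf{x}^{m-1} = \lambda\mathbf{x}^{[m-1]}\},
\]
and kill the $\mathbb{C}^*$-scaling on $\mathbf{x}$ by intersecting $W$ with a generic affine hyperplane $H = \{\ell(\mathbf{x}) = 1\}$. For $\mathcal{T}$ generic, the resulting scheme $W_H \subset H \times \mathbb{C}$ is zero-dimensional; a Bezout-style degree count, combined with the observation that the $\lambda$-coordinate of each point of $W_H$ is a root of the characteristic polynomial of degree $n(m-1)^{n-1}$, yields $|W_H| \leq n(m-1)^{n-1}$, with equality on a Zariski-open subset of $\mathbb{T}(\mathbb{C}^n, m)$.

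By Lemma 5.1, a generic $\mathcal{T}$ has exactly $n(m-1)^{n-1}$ distinct eigenvalues. The projection $\pi : W_H \to \mathbb{C}$ onto the $\lambda$-coordinate has image $\sigma(\mathcal{T})$, because for generic $\ell$ every eigenvariety $V(\lambda)$ meets $H$. Thus $\pi$ is a surjection between finite sets of equal cardinality $n(m-1)^{n-1}$, hence a bijection. This is precisely the hypothesis of the Shape Lemma applied to the ideal defining $W_H$: a zero-dimensional radical ideal whose projection onto the last coordinate is injective lies in shape position, so each $\lambda \in \sigma(\mathcal{T})$ is attained by a single point of $W_H$. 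Translating back, each eigenvalue has a unique projective eigenvector, $V(\lambda)$ is a single line through the origin, and $\operatorname{gm}(\lambda) = 1$.

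The principal obstacle is the Bezout count: I need not just the upper bound $|W_H| \leq n(m-1)^{n-1}$ but also that equality is attained generically and that $W_H$ is generically reduced. The upper bound is a degree computation, but attainment is more delicate; the cleanest route is to exhibit a \emph{single} tensor on which the count is realized with distinct $\lambda$-coordinates (for instance a small perturbation of a diagonal tensor, whose projective eigenpairs can be written down explicitly) and then to invoke upper semicontinuity of fiber dimensions to propagate to a Zariski-open neighborhood. Once the count and the genericity of reducedness are in hand, the Shape Lemma produces the conclusion without further work.
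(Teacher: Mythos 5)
Your route is genuinely different from the paper's proof of this lemma, and in fact essentially reproduces the paper's proof of the \emph{next} lemma (uniqueness of the eigenvector), which is a strictly stronger statement. The paper proves $\operatorname{gm}(\lambda)=1$ by homogenizing the eigenpair system with variables $X_1,\dots,X_n,\Lambda,Z$ in $\mathbb{P}^{n+1}$ and observing that the solution set is invariant under the two-parameter scaling $(X,\Lambda,Z)\mapsto(tX,s\Lambda,sZ)$; since for generic $\mathcal T$ the homogenized system cuts out a curve, each irreducible component is a single orbit of this scaling, forcing each $V(\lambda)$ to have dimension one. Your argument --- lower bound from the cone structure, upper bound by slicing the eigenpair variety $W$ with a generic affine hyperplane and showing the slice is finite for generic $\mathcal T$ via an explicit witness plus semicontinuity --- is sound for the statement at hand and arguably more elementary. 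Note that for the dimension bound you need neither the perturbation nor the Shape Lemma: the unperturbed diagonal tensor with distinct diagonal entries already has $\dim W=1$ (its eigenpair variety is $\{\mathbf 0\}\times\mathbb C$ together with the $n$ coordinate axes paired with the diagonal entries), semicontinuity of fiber dimension in the family over $\mathbb T(\mathbb C^n,m)$ gives $\dim W\le 1$ generically, and $V(\lambda)\times\{\lambda\}\subseteq W$ finishes the proof.

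The genuine gap is in the counting step that underlies your stronger uniqueness claim. Naive Bezout applied to the $n$ equations of degree $m$ cutting out $W_H$ in $H\times\mathbb C\cong\mathbb C^n$ gives $|W_H|\le m^n$, not $n(m-1)^{n-1}$; and the observation that every $\lambda$-coordinate of a point of $W_H$ is a root of $\chi$ only bounds $|\pi(W_H)|$, not $|W_H|$ --- passing from the number of distinct $\lambda$-values to the number of points presupposes exactly the injectivity of $\pi$ you are trying to establish. The sharp bound $|W_H|\le n(m-1)^{n-1}$ is the known generic count of eigenpairs (a multihomogeneous Bezout exploiting that each equation has the form $f_i(\mathbf x)-\lambda x_i^{m-1}$ with $f_i$ homogeneous of degree $m-1$), and must be cited or proved separately. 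Relatedly, your proposed witness for attainment of the count ``with distinct $\lambda$-coordinates'' does not work as described: a diagonal tensor has only $n$ distinct eigenvalues, each of algebraic multiplicity $(m-1)^{n-1}$, and the eigenpairs of a perturbation cannot be written down in closed form; the paper instead derives distinctness of the $n(m-1)^{n-1}$ eigenvalues from the irreducibility of the symbolic characteristic polynomial. None of this affects the conclusion $\operatorname{gm}(\lambda)=1$, which the salvageable core of your argument does establish.
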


\begin{proof}
Given a tensor $\mathcal T\in\mathbb{T}(\mathbb{C}^n,m)$ we consider the eigen-equation system
\[
\mathcal{T}x^{m-1}=\lambda x^{[m-1]}.
\]
This is a polynomial system of $n$ inhomogeneous equations of degree $m$ in $(n+1)$ variables $x_1,\dots, x_n, \lambda$. We homogenize the system so variables are $X_1,\dots,X_n,\Lambda, Z$ such that $x_i=X_i/Z$ and $\lambda=\Lambda/Z$. Generically, the new homogeneous system defines a curve $\mathcal{C}$ in $\mathbb{P}^{n+1}:=\mathbb P\mathbb C^{n+1}$. Notice that whenever $[X_1,\dots,X_n,\Lambda, Z]$ is a solution, $[tX_1,\dots, tX_n, s\Lambda, sZ]$ is also a solution for any $t,s\in \mathbb{C}$. This implies that all irreducible components are parametrized as   $[tX_1,\dots, tX_n, s\Lambda, sZ]$ where $[X_1,\dots,X_n,\Lambda, Z]$ is a point on this component. Correspondingly, this implies that irreducible components of the original system are parametrized by $(t/sx_1,\dots, t/sx_n,\lambda)$. In particular, this shows that for a generic tensor $\mathcal{T}$, the dimension of the eigen-variety of any $\lambda\in \sigma(\mathcal{T})$ is one.
 
\end{proof}

\begin{lemma}[Unique Eigenvector]\label{lem:unique-eigenvector}
Let tensor $\mathcal T\in\mathbb{T}(\mathbb{C}^n,m)$ be generic. Then
$V(\lambda)$ has dimension one and is irreducibe for all $\lambda\in\sigma(\mathcal T)$, i.e., $\mathcal T$ has a unique (up to scaling) eigenvector for every $\lambda\in\sigma(\mathcal T)$.  
\end{lemma}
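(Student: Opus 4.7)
The plan is to combine a Bezout-type degree count with the Shape Lemma. Fix a linear form $\ell\in\mathbb{C}[x_1,\ldots,x_n]$ that is generic with respect to $\mathcal{T}$ (so that $\ell$ does not vanish on any projective eigenvector of $\mathcal{T}$) and set
\[
I := \langle (\mathcal{T}\mathbf{x}^{m-1}-\lambda\mathbf{x}^{[m-1]})_i : 1\leq i\leq n\rangle + \langle \ell(\mathbf{x})-1\rangle\subset \mathbb{C}[x_1,\ldots,x_n,\lambda].
\]
First I would observe that $V(I)$ is zero-dimensional for a generic $\mathcal{T}$: by Lemma~\ref{lem:generic-eigenvector}, each irreducible component of the affine eigen-variety has the form $\mathbb{C}\cdot\mathbf{x}_0\times\{\lambda_0\}$, and the generic hyperplane $\{\ell=1\}$ meets each such line in exactly one point, so $V(I)$ is naturally in bijection with the projective eigen-pair variety $E := \{([\mathbf{x}],\lambda)\in\mathbb{P}^{n-1}\times\mathbb{A}^1:\mathcal{T}\mathbf{x}^{m-1}=\lambda\mathbf{x}^{[m-1]}\}$.

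Next I would compute $\deg V(I)$ via Bezout in the Chow ring of $\mathbb{P}^{n-1}\times\mathbb{P}^1$. Homogenizing $\lambda=\Lambda/Z$, the $n$ eigen-equations become bihomogeneous of bidegree $(m-1,1)$ in $([\mathbf{X}],[\Lambda:Z])$, and one verifies that they have no common zero on the divisor $\{Z=0\}$: any such zero forces $\Lambda\cdot X_i^{m-1}=0$ for all $i$, which is impossible when both $[\mathbf{X}]\in\mathbb{P}^{n-1}$ and $[\Lambda:Z]\in\mathbb{P}^1$. Thus the entire intersection lies in the affine chart and
\[
\deg V(I) = \bigl((m-1)H_1+H_2\bigr)^n = n(m-1)^{n-1}.
\]

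By Lemma~\ref{lem:generic}, the characteristic polynomial of a generic $\mathcal{T}$ has $n(m-1)^{n-1}$ distinct roots, so $|\sigma(\mathcal{T})|=n(m-1)^{n-1}$. Since every eigenvalue admits at least one eigenvector (Proposition~\ref{prop:irreducible}), the $\lambda$-projection $V(I)\to\sigma(\mathcal{T})$ is surjective and $|V(I)|\geq n(m-1)^{n-1}=\deg V(I)$. This forces $V(I)$ to consist of exactly $n(m-1)^{n-1}$ reduced points, so $I$ is radical and $\lambda$ separates the points of $V(I)$. The Shape Lemma then provides a reduced Gr\"obner basis of $I$ (with respect to a lex order in which $\lambda$ is smallest) of the form $\{x_i-g_i(\lambda):i=1,\ldots,n\}\cup\{f(\lambda)\}$ for some univariate polynomials $f,g_1,\ldots,g_n\in\mathbb{C}[\lambda]$. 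For each eigenvalue $\lambda_0\in\sigma(\mathcal{T})$, this exhibits a unique eigenvector $(g_1(\lambda_0),\ldots,g_n(\lambda_0))$ in the slice $\{\ell=1\}$, and hence $V(\lambda_0)=\mathbb{C}\cdot(g_1(\lambda_0),\ldots,g_n(\lambda_0))$ is a single line, irreducible and of dimension one.

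The main obstacle is the Bezout calculation, in particular verifying the absence of contributions from the boundary divisor $\{Z=0\}$ and justifying that a generic $\ell$ meets every affine eigen-line transversally in a unique point; once the count matches $|\sigma(\mathcal{T})|$, radicality of $I$, separation of points by $\lambda$, and the Shape Lemma form all follow formally.
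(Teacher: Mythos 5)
Your proof is correct, and its key step differs from the paper's in a way worth noting. The paper also dehomogenizes (with the chart $x_n=1$ rather than a generic slice $\ell=1$) and also invokes the Shape Lemma, but the crucial count $\#(\mathbb V_0)=n(m-1)^{n-1}$ is essentially asserted there, justified only by the informal remark that $\mathbb V(I)$ is a union of one-dimensional components cut transversally by the slice. You supply exactly the missing ingredient: the multiprojective B\'ezout computation $\bigl((m-1)H_1+H_2\bigr)^n=n(m-1)^{n-1}$ in $\mathbb P^{n-1}\times\mathbb P^1$, together with the verification that the bihomogenized system has no zeros on $\{Z=0\}$, gives the upper bound $|E|\le n(m-1)^{n-1}$ on projective eigenpairs, while surjectivity of the $\lambda$-projection onto the $n(m-1)^{n-1}$ distinct eigenvalues of Lemma~\ref{lem:generic} gives the matching lower bound. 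Your choice of a generic $\ell$ also quietly repairs a small blemish in the paper's chart $x_n=1$, which could in principle miss eigenvectors with $x_n=0$. One observation: once the squeeze $|\sigma(\mathcal T)|\le|E|\le n(m-1)^{n-1}$ closes, each eigenvalue already has exactly one projective eigenvector, so the lemma is proved at that point; the radicality of $I$ and the Shape Lemma appendix are a pleasant confirmation (and recover the paper's parametrization $x_i=g_i(\lambda)$) but are not logically needed for the conclusion. The only place to be slightly careful is in passing from the length of the intersection scheme in $\mathbb P^{n-1}\times\mathbb P^1$ to the length of the affine sliced scheme when claiming $I$ is radical, but since that claim is dispensable, the argument stands.
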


\begin{proof}

Let 
\[
I=\big\langle f_i(\mathbf x,\lambda):=(\mathcal T\mathbf x^{m-1})_i-\lambda x_i^{m-1}, \ i=1,\dots,n\big\rangle
\]
be the ideal generated by the eigenvalue-eigenvector polynomials. 
Define $t_i:=\frac{x_i}{x_n}$ for $i=1,\dots,n-1$. Let
\[
g_i(\mathbf t,\lambda)=f_i((\mathbf t,1),\lambda),\ \text{for all }i=1,\dots,n,
\]
and
\[
\mathbb V_0=\mathbb V(g_1,\dots,g_n).
\]
Then, $\mathbb V_0$ is the intersection of $\mathbb V(I)$ and $\{(\mathbf x,\lambda) : x_n=1\}$. 

Since $\mathcal T$ is generic, $\mathbb V(I)$ has dimension one and $\sigma(\mathcal T)$ is of cardinality $n(m-1)^{n-1}$, i.e., all the eigenvalues are distinct (cf.\ Lemma~\ref{lem:generic}). 
Therefore,
$\mathbb V_0$ is zero-dimensional and has distinct $\lambda$-coordinate.
For a generic tensor $\mathcal T$, it follows from a standard algebraic geometry argument that $\langle g_1,\dots,g_n\rangle$ is a radical ideal. Henceforth, we have that the reduced Gr\"obner basis of $\langle g_1,\dots,g_n\rangle$  in the lexicographic term order has the following shape (cf. the Shape Lemma (e.g., \cite[Proposition~2.3]{s02}))
\[
\mathcal G=\{t_1-q_1(\lambda),\dots,t_{n-1}-q_{n-1}(\lambda),q_n(\lambda)\}.
\]
When $\mathcal T$ is generic, $\mathbb V(I)$ is a union of finite irreducible compoents of dimension one and many singletons which will be cut off by $\mathbb V_0$, and then we have that $\#(\mathbb V_0)=n(m-1)^{n-1}=\operatorname{deg}(q_n)$ and $q_n(\lambda)=\chi(\lambda)$, the characteristic polynomial of $\mathcal T$. Therefore, 
$V(\lambda)$ has dimension one and is irreducibe for all $\lambda\in\sigma(\mathcal T)$, i.e., it has a unique (up to scaling) eigenvector.  
\end{proof}

In conclusion, we have the next theorem. 
\begin{theorem}[Generic Tensor]\label{thm:generic}
Let tensor $\mathcal T\in\mathbb{T}(\mathbb{C}^n,m)$ be generic. Then, 
\[
\operatorname{am}(\lambda)=\operatorname{gm}(\lambda)=1
\]
for all $\lambda\in\sigma(\mathcal T)$.
\end{theorem}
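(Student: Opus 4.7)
The plan is essentially to assemble the conclusion from the three preceding lemmas, since each of them already isolates one of the two equalities (or a strengthening thereof) under a generic hypothesis on $\mathcal T$. No new geometric input is needed at this stage; the main content has been done in Lemmas~\ref{lem:generic}, \ref{lem:generic-eigenvector} and \ref{lem:unique-eigenvector}.

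First, I would invoke Lemma~\ref{lem:generic}, which says that generically $\operatorname{am}(\lambda)=1$ for every $\lambda\in\sigma(\mathcal T)$; the underlying reason is the irreducibility of $\operatorname{DET}(\lambda\mathcal I-\mathcal T)$ as a symbolic polynomial in the entries of $\mathcal T$ and in $\lambda$, which forces the characteristic polynomial to be squarefree off a proper Zariski-closed subset $U_1\subsetneq\mathbb T(\mathbb C^n,m)$. Next, I would invoke Lemma~\ref{lem:generic-eigenvector} (or, more strongly, Lemma~\ref{lem:unique-eigenvector}), which gives $\operatorname{gm}(\lambda)=1$ off another proper Zariski-closed set $U_2$: every irreducible component of the eigenvariety corresponds to the one-dimensional scaling orbit of a projective point, so each $V(\lambda)$ is a finite union of lines through the origin, hence one-dimensional.

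To finish, I would take the complement of $U_1\cup U_2$, which is a nonempty Zariski-open subset of $\mathbb T(\mathbb C^n,m)$; on this generic locus both conditions hold simultaneously, giving $\operatorname{am}(\lambda)=\operatorname{gm}(\lambda)=1$ for every $\lambda\in\sigma(\mathcal T)$.

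There is no real obstacle: the only subtlety worth flagging is that ``generic'' has to mean the same thing in both lemmas, i.e.\ membership in some nonempty Zariski-open set, so that the intersection of the two generic loci is still generic. Once this observation is made, the theorem is an immediate corollary of the preceding lemmas and no further calculation is required.
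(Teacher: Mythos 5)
Your proposal is correct and is essentially the paper's own argument: the theorem is presented there simply as the conclusion of Lemmas~\ref{lem:generic} and \ref{lem:generic-eigenvector} (with Lemma~\ref{lem:unique-eigenvector} as a strengthening), the implicit step being exactly the one you flag, namely that the intersection of the two nonempty Zariski-open loci is again nonempty and open.
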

Note that, in the generic case, we then always have the relation
\[
\operatorname{am}(\lambda)=\operatorname{gm}(\lambda)(m-1)^{\operatorname{gm}(\lambda)-1}.
\]

\section{Duality}\label{sec:duality}
The set of symmetric tensors of order $m$ and dimension $n$ forms a strictly smaller subspace of the space of tensors of order $m$ and dimension $n$. Therefore, results in Section~\ref{sec:generic} do not apply directly to generic symmetric tensors which will be considered in this section. 

We consider the natural duality theory associated to eigenvalue theory in this section. 
We work in the space $\operatorname{\mathcal S}^m(\mathbb C^n)$ of $m$-th order and $n$-dimensional symmetric tensors. 

For the eigenvalues of symmetric tensors, we have a geometric interpretation in terms of the duality between the determinantal hypersurface and the Veronese variety. We refer to \cite{l12,h77,gkz94} for basic definitions. 

The $m$-th Veronese map $\nu_m$ from $\mathbb C^n$ to $\operatorname{\mathcal S}^m(\mathbb C^n)$ is 
\[
\nu_m(\mathbf x)=\mathbf x^{\otimes m}\ \text{for all }\mathbf x\in\mathbb C^n.
\]
The image of $\nu_m$ over $\mathbb C^n$ is the \textit{Veronese variety} $\nu_m(\mathbb C^n)$. This map is defined on $\mathbb{PC}^{n-1}$ naturally, and the image lies in the projective space $\mathbb P\operatorname{\mathcal S}^m(\mathbb C^n)$. The corresponding variety $\nu_m(\mathbb{PC}^{n-1})$ is a smooth, non-degenerate, homogeneous and irreducible variety of dimension $n-1$ \cite{l12}. We know that the tangent space of $\nu_m(\mathbb{PC}^{n-1})$ at a point $[\mathbf x^{\otimes m}]$ \footnote{For a variety $X\in\mathbb P V$ over the projective space of a linear space $V$, we write $[\mathbf y]$ for the equivalent class of the point $\mathbf y$ in the affine cone of $X$.} is
\[
T_{[\mathbf x^{\otimes m}]}\nu_m(\mathbb{PC}^{n-1})=\mathbb P\{\operatorname{Sym}(\mathbf x^{\otimes m-1}\otimes\mathbf y) : \mathbf y\in\mathbb C^n\}.
\]
If we identify the dual of $\mathbb P\operatorname{\mathcal S}^m(\mathbb C^n)$ as itself, and a point $H\in \mathbb P\operatorname{\mathcal S}^m(\mathbb C^n)$ as the hyperplane it defines in the tutorial way, the dual variety of $\nu_m(\mathbb{PC}^{n-1})$ is
\[
\nu_m(\mathbb{PC}^{n-1})^\mathsf{\vee}:=\{H\in \mathbb P\operatorname{\mathcal S}^m(\mathbb C^n) : T_{[\mathbf x^{\otimes m}]}\nu_m(\mathbb{PC}^{n-1})\subset H\ \text{for some }[\mathbf x^{\otimes m}]\}. 
\]
Let $\operatorname{SDET}$ be the restriction of the determinant $\operatorname{DET}$ on the symmetric tensor space. It is called \textit{symmetric hyperdeterminant} \cite{q05}. 
By the definition of determinant, it is easy to see that 
\[
\nu_m(\mathbb{PC}^{n-1})^\mathsf{\vee}=\mathbb V(\operatorname{SDET})=\mathbb V(\operatorname{DET})\cap\mathbb P\operatorname{\mathcal S}^m(\mathbb C^n) \subset\mathbb P\operatorname{\mathcal S}^m(\mathbb C^n),
\]
where the second equality follows from the factorization of the hyperdeterminant \cite{o12}. 
Therefore, geometrically, a nonzero tensor $\mathcal A\in \operatorname{\mathcal S}^m(\mathbb C^n)$ has an eigenvector $\mathbf x$ corresponding to eigenvalue zero if and only if the determinantal hypersurface (discriminant hypersurface in \cite{l12}) $\mathbb V(\operatorname{SDET})$ is tangent to the variety $\nu_m(\mathbb{PC}^{n-1})$ at the pair $([\mathcal A],[\mathbf x^{\otimes m}])$. 

Note that $(\lambda,\mathbf x)$ is an eigenpair of $\mathcal A$ if and only if $\mathbf x$ is an eigenvector of $\mathcal A-\lambda\mathcal I$ corresponding to eigenvalue zero. 

For every tensor $\mathcal A\in \operatorname{\mathcal S}^m(\mathbb C^n)$, 
\[
l(\lambda)=[\mathcal A-\lambda\mathcal I]
\]
defines a line in $\mathbb P\operatorname{\mathcal S}^m(\mathbb C^n)$. 
The next proposition is immediate.
\begin{proposition}[Intersection Interpretation]\label{prop:eigenvalue-dual}
Let $\mathcal A\in \operatorname{\mathcal S}^m(\mathbb C^n)$. Then, $\lambda\in\sigma(\mathcal A)$ is an eigenvalue of $\mathcal A$ if and only if the line $l(\gamma)=[\mathcal A-\gamma\mathcal I]$ has an intersection with the determinantal hypersurface $\mathbb V(\operatorname{SDET})$ at $\gamma=\lambda$. 
\end{proposition}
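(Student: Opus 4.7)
The plan is to unpack the definitions and chain together the three identifications already established in the paper: (i) eigenvalue as a shifted-zero eigenvalue, (ii) zero eigenvalue as a determinantal condition, and (iii) the determinant of a symmetric tensor as the symmetric hyperdeterminant. None of these steps is really an obstacle; the content of the proposition is essentially a geometric rewording.

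First I would observe that, directly from Definition~\ref{def:eigenvalue-eigenvector}, a scalar $\lambda$ lies in $\sigma(\mathcal A)$ if and only if there exists $\mathbf x\in\mathbb C^n\setminus\{\mathbf 0\}$ with $(\mathcal A-\lambda\mathcal I)\mathbf x^{m-1}=\mathbf 0$, which is the statement that $0\in\sigma(\mathcal A-\lambda\mathcal I)$. By Proposition~\ref{prop:irreducible}, this is in turn equivalent to $\operatorname{Det}(\mathcal A-\lambda\mathcal I)=0$.

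Next I would use that $\mathcal A-\gamma\mathcal I\in\operatorname{\mathcal S}^{m}(\mathbb C^n)$ for every $\gamma\in\mathbb C$, since $\mathcal A$ is symmetric and $\mathcal I$ is symmetric. Because $\operatorname{SDET}$ is simply the restriction of $\operatorname{DET}$ to the symmetric subspace $\operatorname{\mathcal S}^{m}(\mathbb C^n)\subset\mathbb T(\mathbb C^n,m)$, its evaluation on $\mathcal A-\lambda\mathcal I$ coincides with $\operatorname{Det}(\mathcal A-\lambda\mathcal I)$. Hence $\operatorname{Det}(\mathcal A-\lambda\mathcal I)=0$ if and only if $[\mathcal A-\lambda\mathcal I]\in\mathbb V(\operatorname{SDET})$.

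Finally I would translate this into the intersection-theoretic language of the statement. The map $\gamma\mapsto[\mathcal A-\gamma\mathcal I]$ parametrizes the line $l(\gamma)$ in $\mathbb P\operatorname{\mathcal S}^{m}(\mathbb C^n)$ (well-defined provided $\mathcal A\notin\mathbb C\mathcal I$; in the degenerate case $\mathcal A=\mu\mathcal I$ the point $[\mathcal A-\gamma\mathcal I]$ is the constant $[\mathcal I]\in\mathbb V(\operatorname{SDET})$ for every $\gamma\neq\mu$, consistent with Proposition~\ref{prop:identity}). Therefore $l(\lambda)\in\mathbb V(\operatorname{SDET})$ precisely when $\lambda\in\sigma(\mathcal A)$, which is the stated equivalence.

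The only substantive point to double-check is the identification of $\operatorname{Det}(\mathcal A-\lambda\mathcal I)$ with the evaluation of $\operatorname{SDET}$, but this is immediate from the definition of $\operatorname{SDET}$ given in the paragraph preceding the proposition. Everything else is unwinding definitions, so no genuine obstacle is anticipated.
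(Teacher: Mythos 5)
Your argument is correct and matches the paper's own treatment: the paper simply declares the proposition ``immediate'' after the preceding discussion, which identifies $\lambda\in\sigma(\mathcal A)$ with $0\in\sigma(\mathcal A-\lambda\mathcal I)$, hence with $\operatorname{Det}(\mathcal A-\lambda\mathcal I)=0$ via Proposition~\ref{prop:irreducible}, hence with $[\mathcal A-\lambda\mathcal I]\in\mathbb V(\operatorname{SDET})$. Your explicit handling of the degenerate case $\mathcal A\in\mathbb C\mathcal I$ is a small extra care the paper omits.
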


The hypersurface $\mathbb V(\operatorname{SDET})$ has degree $n(m-1)^{n-1}$ \cite{gkz94,q05}. With multiplicity, 
the intersections of the line $l(\lambda)$ with the hypersurface $\mathbb V(\operatorname{SDET})$ are exactly the eigenvalues of the tensor $\mathcal A$.  

\begin{proposition}[Generic Symmetric Tensor]\label{prop:generic-dual}
Let $\mathcal A\in \operatorname{\mathcal S}^m(\mathbb C^n)$ be generic. Then, every eigenvalue has both algebraic multiplicity and geometric multiplicity one. 
\end{proposition}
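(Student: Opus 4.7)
The plan is to transfer the question entirely into projective intersection geometry via Proposition~\ref{prop:eigenvalue-dual} and biduality for the Veronese variety $X := \nu_m(\mathbb{PC}^{n-1})$. The eigenvalues of $\mathcal A$ are the parameters $\lambda$ at which the line $l(\lambda)=[\mathcal A-\lambda\mathcal I]$ meets the determinantal hypersurface $X^{\vee}=\mathbb V(\operatorname{SDET})$, and the algebraic multiplicity $\operatorname{am}(\lambda)$ equals the local intersection multiplicity of $l$ with $X^{\vee}$ at $[\mathcal A-\lambda\mathcal I]$. Since $\operatorname{Det}(\mathcal I)=1$, the base point $[\mathcal I]$ lies off $X^{\vee}$, and reading the eigenvectors of $\mathcal A$ for $\lambda$ through the Veronese embedding identifies them with the contact points $[\mathbf x^{\otimes m}]\in X$ whose tangent space to $X$ lies in the hyperplane defined by $\mathcal A-\lambda\mathcal I$; hence $\operatorname{gm}(\lambda)$ equals one plus the projective dimension of this contact locus.

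For the algebraic multiplicity, I would run a Bertini-type transversality argument. Let $Z\subset X^{\vee}$ be the union of the singular locus of $X^{\vee}$ with the closed subvariety where a line through $[\mathcal I]$ would meet $X^{\vee}$ non-transversally. Because $X^{\vee}$ is an irreducible hypersurface, being the dual of the irreducible, non-degenerate, smooth Veronese variety, $Z$ is a proper closed subvariety of $X^{\vee}$. Projecting $Z$ from the fixed base point $[\mathcal I]$ gives a proper subvariety of the $(N{-}1)$-dimensional space of lines through $[\mathcal I]$ (here $N=\dim\mathbb P\operatorname{\mathcal S}^m(\mathbb C^n)$), so for a generic $\mathcal A$ the line $l$ avoids $Z$ entirely and meets $X^{\vee}$ transversally at exactly $\deg(X^{\vee})=n(m-1)^{n-1}$ smooth points. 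Transversality forces intersection multiplicity one at every intersection, giving $\operatorname{am}(\lambda)=1$ for every $\lambda\in\sigma(\mathcal A)$.

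For the geometric multiplicity, I would invoke biduality for $X$. Since $X^{\vee}$ is a hypersurface, the dual defect of the Veronese variety vanishes, so at a generic smooth point $[H]\in X^{\vee}$ the contact locus $\{[\mathbf x^{\otimes m}]\in X: T_{[\mathbf x^{\otimes m}]}X\subseteq H\}$ is a single point. The locus $W\subset X^{\vee}$ where the contact fiber jumps to positive dimension is a proper closed subvariety, cut out by rank conditions on the natural incidence correspondence; applying the same projection-from-$[\mathcal I]$ argument, a generic line through $[\mathcal I]$ misses $W$ as well. Thus at each of the $n(m-1)^{n-1}$ intersection points the contact point is unique, i.e.\ the eigenvector is unique up to scaling, so $V(\lambda)$ is a single affine line and $\operatorname{gm}(\lambda)=1$.

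The main obstacle is making the Bertini-style reasoning rigorous: one must verify that the singular locus of $X^{\vee}$, the locus of non-transverse intersections from $[\mathcal I]$, and the locus $W$ of positive-dimensional contact fibers are all proper closed subvarieties of $X^{\vee}$, and that the linear projection from the fixed base point $[\mathcal I]$ is well-behaved enough to conclude that a generic $\mathcal A$ avoids their images in the pencil of lines. This is where one uses irreducibility of the symmetric hyperdeterminant hypersurface (inherited from irreducibility of the Veronese), the biduality theorem, and the classical fact that a generic pencil based at a point outside an irreducible hypersurface intersects it transversally along its smooth locus.
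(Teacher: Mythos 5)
Your proposal is correct, and its treatment of the geometric multiplicity---deducing $\operatorname{gm}(\lambda)=1$ from reflexivity of the Veronese variety, i.e., that the contact locus at a generic smooth point of the irreducible dual hypersurface $\mathbb V(\operatorname{SDET})$ is a single point---is exactly the paper's argument, which cites \cite[Theorem~1.1.1.5]{gkz94} for this. Where you diverge is the algebraic multiplicity: the paper argues algebraically, as in Lemma~\ref{lem:generic}, from irreducibility of the symmetric characteristic polynomial and its monicity in $\lambda$ (so its $\lambda$-discriminant is a nonzero polynomial in the entries of $\mathcal A$ and the roots are generically simple), whereas you identify $\operatorname{am}(\lambda)$ with the local intersection multiplicity of the line $l$ with $\mathbb V(\operatorname{SDET})$ and run a Bertini-type transversality argument for lines through the fixed base point $[\mathcal I]\notin\mathbb V(\operatorname{SDET})$. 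The two are essentially equivalent: your non-transversality locus is cut out on $\mathbb V(\operatorname{SDET})$ by the directional derivative $D_{\mathcal I}\operatorname{SDET}$, whose non-vanishing along the hypersurface is again forced by irreducibility of $\operatorname{SDET}$ together with $\operatorname{SDET}(\mathcal I)=1$. Your version has the merit of making explicit the genericity-transfer step that the paper leaves implicit, namely that the bad loci on the hypersurface (singular points, points of tangency from $[\mathcal I]$, points with positive-dimensional or non-unique contact fibers) project from $[\mathcal I]$ onto a proper closed subvariety of the space of lines, so that for generic $\mathcal A$ every intersection point $[\mathcal A-\lambda\mathcal I]$ lands in the good open subset. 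For the last of these loci, the cleanest justification is that the conormal variety of $\nu_m(\mathbb{PC}^{n-1})$ is irreducible of dimension $\dim\mathbb V(\operatorname{SDET})$ and maps properly and generically one-to-one onto $\mathbb V(\operatorname{SDET})$, so the set of points whose fiber is not a single reduced point is contained in a proper closed subset.
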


\begin{proof}
The symmetric hyperdeterminant $\operatorname{SDET}$ is still irreducible \cite{q05,cs13}. The characteristic polynomial $\operatorname{Det}(\mathcal T-\lambda\mathcal I)$ for a symmetric tensor is still monic. Therefore, we can show that for a generic tensor all the eigenvalues have algebraic multiplicity one. 

The dual of $\nu_m(\mathbb{PC}^{n-1})$ is an irreducible hypersurface \cite[Proposition 1.1.1.3]{gkz94}, then at a generic point (thus a smooth point) on this hypersurface , the corresponding tensor has a unique zero eigenvector \cite[Theorem 1.1.1.5]{gkz94}. Since the eigenvectors of a tensor $\mathcal T$ for the eigenvalue $\lambda$ are exactly the eigenvectors of the tensor $\mathcal T-\lambda\mathcal I$ for the eigenvalue zero, the result now follows from Proposition~\ref{prop:eigenvalue-dual}. 
\end{proof}

\section{Final Remarks}\label{sec:final}
In this article, we studied the relationship between the algebraic multiplicity and the geometric multiplicity of an eigenvalue of a given tensor. Unlike the case for a matrix, the set of eigenvectors of an eigenvalue for a tensor does not have to be a linear subspace. In several cases, we show that the algebraic multiplicity is bounded below by a quantity determined by the geometric multiplicity and the order of the tensor (cf.\ \eqref{relation}), which reduces to the geometric multiplicity in the matrix counterpart. In general, the algebraic multiplicity should be bounded below by a quantity given by the
number of irreducible components of the eigenvariety and their dimensions (includes the geometric multiplicity), i.e., \eqref{general-relation}. 

The study of \eqref{relation} and \eqref{general-relation} would be interesting: the right hand sides of which should be invariants under the orthogonal linear group action, while the left hand side is not. One can also consider \eqref{general-relation} in the scheme level, for example, if there is an irreducible component $V$ of an eigenvariety being a double line as a scheme, then one should count this line twice in the summand. It would not be surprising that the inequality in \eqref{general-relation} becomes equality in the scheme level.

\subsection*{Acknowledgement} We thank Professor Bernd Sturmfels for suggesting us the nomenclature eigenvariety. This work was started during our visit to National Institute for Mathematical Sciences, South Korea, which was supported by the 2014 Thematic Program on Applied Algebraic Geometry; and continued during our visit to Simons Institute for the Theory of Computing, University of California at Berkeley, which was supported by the 2014 Program on Algorithms and Complexity in Algebraic Geometry. 
This work is also partially supported by National Science Foundation of China (Grant No. 11401428). 
\bibliographystyle{model6-names}

\end{document}